\documentclass[11pt]{amsart}

\usepackage{amssymb,amsfonts,amsmath}
\usepackage{amsthm, stmaryrd}
\usepackage{graphicx}
\usepackage[all,arc]{xy}
\usepackage{hyperref}
\hypersetup{colorlinks,allcolors=violet}
\usepackage{enumerate}
\usepackage{bbm}
\usepackage{dsfont}
\usepackage{mathrsfs}
\usepackage{tikz-cd}
\usetikzlibrary{shapes}
\usepackage{import}
\usepackage{float}
\restylefloat{table}
\usepackage{multirow}
\usepackage{pdflscape}
\usepackage{listofitems}
\usepackage{standalone}

\usepackage[margin=1in]{geometry}
\usepackage{mathtools}
\usetikzlibrary{decorations.pathmorphing}
\usepackage{mathrsfs}
\newtheorem{thm}{Theorem}[section]
\newtheorem*{thm*}{Theorem}

\newtheorem{cor}[thm]{Corollary}
\newtheorem{prop}[thm]{Proposition}
\newtheorem{lem}[thm]{Lemma}

\theoremstyle{definition}
\newtheorem{defn}[thm]{Definition}

\newtheorem{exmp}[thm]{Example}

\newtheorem{rem}[thm]{Remark}
\newtheorem*{thm1.2}{\textrm{Theorem 1.2}}

\theoremstyle{remark}
\newcommand{\Mbar}{\overline{\mathcal{M}}}
\newcommand{\Tree}{\mathrm{Tree}}
\newcommand{\M}{\mathcal{M}}

\newtheorem{sch}[thm]{Scholium}

\newcommand{\Z}{\mathbb{Z}}

\newcommand{\QQ}{\mathbb{Q}}

\renewcommand{\P}{\mathbb{P}}
\newcommand{\bbS}{\mathbb{S}}

\newcommand{\Aut}{\operatorname{Aut}}

\newcommand{\val}{\operatorname{val}}

\def\B{\mathbf{B}}
\def\C{\mathbb{C}}

\def\G{\mathbf{G}}

\def\P{\mathbb{P}}
\def\R{\mathbb{R}}
\def\T{\mathsf{T}}

\def\Z{\mathbb{Z}}

\def\calD{\mathcal{D}}

\def\calF{\mathcal{F}}

\def\calW{\mathcal{W}}

\def\M{\mathcal{M}}

\newcommand{\TT}{\mathbf{T}}

\newcommand{\Conf}{\mathrm{Conf}}

\newcommand\cycle[2][\,]{%
  \readlist\thecycle{#2}%
  (\foreachitem\i\in\thecycle{\ifnum\icnt=1\else#1\fi\i})%
}

\makeatletter
\let\c@equation\c@thm
\makeatother
\numberwithin{equation}{section}

\graphicspath{ {images/} }
\title{Automorphisms of the boundary complex of $\Mbar_{0, n}(\P^r, d)$}

\author[A. Joisha]{Arjun Joisha}\address{Lynbrook High School, San Jose CA}\email{\url{ajoisha898@student.fuhsd.org}}

\author[S. Kannan]{Siddarth Kannan}\address{Department of Mathematics, Massachusetts Institute of Technology}
\email{\url{spkannan@mit.edu}}

\begin{document}
\maketitle\thispagestyle{empty}
%\tableofcontents
\begin{abstract}
We compute the automorphism group of the dual complex $\T_{d, n}$ of the boundary divisor in the Kontsevich moduli space $\Mbar_{0, n}(\P^r, d)$. When $d \geq 2$, we find that $\Aut(\T_{d, n}) \cong \bbS_{n}$, while $\Aut(\T_{1, n}) \cong \bbS_{n + 1}$ for all $n \geq 4$. The complex $\T_{1, n}$ is also the dual complex of the boundary divisor in the Fulton--MacPherson compactification of the configuration space of $n$ points on $X$, if $X$ is any smooth, proper, and connected algebraic variety over $\C$. Following work of Massarenti, this implies that $\T_{1, n}$ admits automorphisms which in general do not extend to $X[n]$.
\end{abstract}

\section{Introduction}
The Kontsevich moduli space $\Mbar_{0, n}(\P^r, d)$ parameterizes $n$-pointed stable maps of genus zero to the projective space $\P^r$. The genus-zero Gromov--Witten invariants of $\P^r$ are intersection numbers on this moduli space. It is a smooth Deligne--Mumford stack, and furnishes a normal-crossings compactification of the moduli space $\M_{0, n}(\P^r, d)$ of maps from smooth $n$-pointed rational curves to $\P^r$. Let
\[ \partial \Mbar_{0, n}(\P^r, d) := \Mbar_{0, n}(\P^r, d) \smallsetminus \M_{0, n}(\P^r, d) \]
be the boundary divisor parameterizing maps from nodal curves.
We define $\T_{d, n}$ to be the dual complex of this normal-crossings divisor. There is a natural action of the symmetric group $\bbS_n$ on this dual complex, induced by the action of $\bbS_n$ on the moduli space.
% The space $\T_{d, n}$ can be identified with the following (generalized) simplicial complexes in the geometric topology literature:
% \begin{itemize}
% \item the quotient of the complex of separating curves on a surface $\Sigma_{d, n}$ of genus $d$ with $n$ ordered boundary components by the pure mapping class group action;
% \item the simplicial join of the Robinson-Whitehouse tree space $T_{n + d -1}$ with a standard $d$-simplex, modulo a natural $\bbS_d$-action.
% \end{itemize}
In this note, we use combinatorial methods to compute the automorphism group of $\T_{d, n}$ for all $d \geq 1$ and $n \geq 0$. When $d = 1$, the complex $\T_{1, n}$ is also the dual complex of the boundary divisor in the Fulton--MacPherson compactification \[\Conf_n(X) \subset X[n]\] of the ordered configuration space of $n$ points on any smooth, proper, and connected variety $X$. This is the unique case we consider where the dual complex admits automorphisms which do not extend to the moduli space.

\begin{thm}\label{thm:mainthm}
When $d \geq 2$, we have
\[ \Aut(\T_{d, n}) \cong \bbS_n \]
for all $n$. When $d = 1$, we have
\[ \Aut(\T_{1, n}) \cong \bbS_{n + 1} \]
for $n \geq 4$, while $\Aut(\T_{1, 3}) \cong \bbS_3$ and $\Aut(\T_{1, n})$ is trivial for $n \leq 2$.
\end{thm}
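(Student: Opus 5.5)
The plan is to make $\T_{d,n}$ combinatorially explicit and then read off its automorphisms from intrinsic features of the complex. A simplex of $\T_{d,n}$ is an isomorphism class of stable trees. Such a tree has $n$ labelled legs and a degree $d_v \geq 0$ on each vertex with $\sum d_v = d$; stability means every vertex with $d_v = 0$ has valence at least $3$. Its faces are obtained by contracting edges. The vertices of $\T_{d,n}$ are one-edge trees, i.e.\ unordered pairs $\{(S,e),(S^c,d-e)\}$ with $S \subseteq [n]$, subject to stability on each side. I will treat $\T_{d,n}$ as a generalized $\Delta$-complex. Unmarked chains of positive-degree components show that a simplex need not be determined by its vertex set. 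Hence an automorphism must be recorded by its action on trees, and not merely on the $1$-skeleton.

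\textbf{The case $d=1$.} The unique positive-degree vertex plays the role of the root in a Fulton--MacPherson screen, and it may have any valence. The vertices of $\T_{1,n}$ are therefore the subsets $S \subseteq [n]$ with $|S| \geq 2$: each is the set of legs on a degree-$0$ side. Two vertices span an edge exactly when the subsets are nested or disjoint. Moreover, $\T_{1,n}$ is a genuine simplicial complex, since simplices correspond to nested families.
\begin{itemize}
\item The vertex $S=[n]$ is compatible with every other vertex. So $\T_{1,n}$ is the cone over the subcomplex spanned by the subsets $2 \leq |S| \leq n-1$.
\item Regard the root as an extra leg $0$. Then this subcomplex is identified with the boundary complex of $\Mbar_{0,n+1}$: a subset $S \subseteq [n]$ corresponds to the split $S \mid S^c \cup \{0\}$.
\item For $n \geq 3$, the cone point is the unique vertex adjacent to all other vertices. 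Indeed, two singleton-complement subsets $[n]\smallsetminus\{i\}$ and $[n]\smallsetminus\{j\}$ are neither nested nor disjoint. Hence $\Aut(\T_{1,n}) \cong \Aut(\Delta(\partial \Mbar_{0,n+1}))$.
\end{itemize}
Next I use the known computation of the automorphisms of the tree space (the boundary complex of $\Mbar_{0,m}$): the group is $\bbS_m$ for $m \geq 5$, and for $m=4$ the complex is three points, with automorphism group $\bbS_3$. This gives the $d=1$ statement for $n \geq 3$. The cases $n \leq 2$ are direct checks: the complex is a point or empty.

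\textbf{The case $d \geq 2$.} Here I would proceed in four steps.
\begin{enumerate}
\item Classify vertices by intrinsic invariants, such as the number of edges or facets through a vertex and the isomorphism type of its link. The aim is to show that an automorphism preserves the degree-split type $\{e, d-e\}$ and the size $|S|$. The link of a vertex is essentially a join of complexes of the same kind with smaller parameters, so induction on $(d,n)$ should help.
\item Isolate the vertices $(\{i,j\},0)\mid([n]\smallsetminus\{i,j\},d)$. These behave like the edges of a complete graph on $[n]$, so an automorphism permutes them via a permutation $\sigma \in \bbS_n$.
\item Compose with $\sigma^{-1}$ to reduce to an automorphism $\phi$ fixing all these vertices. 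Then show that $\phi$ fixes every vertex, by expressing each vertex through its compatibility pattern with the fixed ones.
\item Finally, show that $\phi$ fixes every simplex, including the non-simplicial multi-edge situations. For this I would argue facet by facet, using that the facets are trivalent trees, each of which is recovered from its codimension-one faces.
\end{enumerate}

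\textbf{Expected obstacle.} I expect the main difficulty to be twofold. First, one must rule out exotic automorphisms that mix degree labels with marking data, since unlike the $d=1$ case there is no extra leg to exploit. Second, one must handle the non-simplicial gluings arising from unmarked positive-degree components. Small $n$, especially $n \leq 1$, will need separate treatment, because there are then few marked vertices to anchor the argument. In those cases I would try to show directly that the degree data is rigid, so that the automorphism group is trivial.
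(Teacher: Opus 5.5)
\textbf{The $d=1$ case.} Your argument here is essentially the paper's: $\T_{1,n}\cong\mathrm{Cone}(\T_{0,n+1})$, the cone point $S=[n]$ (the vertex $\B_{1,\varnothing}$) is the only vertex joined to all others for $n\ge3$, and the rest is the known computation of $\Aut(\T_{0,n+1})$. Your uniqueness check only treats $|S|=n-1$. For $2\le|S|\le n-2$, take a two-element set meeting $S$ in one point.

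\textbf{The gap: Step 4 for $d\ge2$.} Fixing every $0$-cell does not fix the higher cells, because cells are not determined by their faces. In $\T_{10,0}$, the three-vertex paths with weights $2,5,3$ and $2,1,7$ are distinct $1$-cells with the same ordered pair of vertices $(\B_{3,\varnothing},\B_{2,\varnothing})$.

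Arguing ``facet by facet'' is circular. Recovering a top cell from its codimension-one faces presupposes that those faces are already fixed. That in turn needs the same recovery one dimension lower, which is exactly what fails.

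The paper adds an invariant at this point. The star $\mathbf{C}$ is rigid, so $\B_{1,\varnothing}$ and $1$-ends are preserved. Sprouting then shows that the multiset $\calW_{\TT}$ of vertex weights is preserved. A $p$-cell is then rebuilt from its contraction deck together with $\calW_{\TT}$. Your plan has no substitute for this. Step 2 also needs care at $n=4$, where the disjointness graph on pairs has the extra symmetry $\{i,j\}\mapsto\{i,j\}^c$.

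\textbf{For $d=2$, Step 4 is false, and so is the statement.} List the vertices of a path in order as (weight, markings). Define $\psi$ on $\T_{2,n}$ as follows.
\begin{itemize}
\item Suppose a tree has two weight-one vertices, exactly one of which is an unmarked leaf hanging off a weight-zero vertex $p'$. Delete that leaf and give $p'$ weight one. Give the other weight-one vertex $q$ weight zero, and attach to $q$ a new unmarked weight-one leaf whose edge keeps the old label. This is stable, since $q$ is not an unmarked leaf and so has valence plus markings at least $2$.
\item If both weight-one vertices are unmarked leaves on distinct vertices, swap the labels of the two $1$-ends.
\item Otherwise, $\psi$ is the identity.
\end{itemize}
Contraction never creates a $1$-end. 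When the two weight positions merge, both placements of the $1$-end give the same cell. A short case check then shows that $\psi$ commutes with all face maps, so $\psi$ is an involutive automorphism commuting with $\bbS_n$.

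$\psi$ fixes every $0$-cell. However, for $B\sqcup C=[n]$ with $B,C\neq\varnothing$, it swaps the distinct $1$-cells $(1,\varnothing),(0,B),(1,C)$ and $(1,\varnothing),(0,C),(1,B)$. Both have ordered vertices $(\B_{1,B},\B_{1,\varnothing})$ and weight multiset $\{0,1,1\}$. For $n\ge3$, only the identity of $\bbS_n$ fixes every $\B_{0,\{i,j\}}$. Hence $\Aut(\T_{2,n})$ contains a copy of $\bbS_n\times\Z/2$. For $n=2$, $\psi$ is the transposition, which acts trivially on vertices.

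\textbf{The paper's proof fails at the same point.} In Case II, suppose the unmarked leaf $v_j$ has weight $y-x$ and hangs off the weight-$x$ end of edge $0$, whose other end has weight $y$. Then both endpoints of edge $0$ in $\TT_j$ have weight $y$, and $v_{\mathrm{new}}$ cannot be identified.

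The same configuration also defeats reconstruction from the deck plus $\calW$ for even $d\ge4$. For example, in $\T_{4,1}$ the paths $(1,\varnothing),(1,\{1\}),(2,\varnothing)$ and $(1,\varnothing),(1,\varnothing),(2,\{1\})$ have the same deck and the same $\calW$. So for $d\ge3$ you would need a further invariant, and for $d=2$ the target statement has to be corrected.
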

For the $d \geq 2$ case of Theorem \ref{thm:mainthm}, our first step is to show that any automorphism of $\T_{d, n}$ is determined by its action on the $0$-cells. If $\T_{d, n}$ were an ordinary abstract simplicial complex, this property would hold by definition, but care must be taken because $\T_{d, n}$ is generally a \textit{symmetric $\Delta$-complex} in the sense of \cite{cgp}. It is possible for distinct $p$-cells to have the same ordered list of $(p-1)$-dimensional faces, and indeed this occurs in $\T_{d, n}$. On the level of the moduli space, this corresponds to the fact that the dual tree of a stable map is not determined by the dual trees of all possible smoothings of the map. Geometrically, this corresponds to the existence of strata of the divisor which are not determined by their local branches. See Figure \ref{fig:contraction_deck_counterexmp} for an example.

\begin{figure}
    \centering
    \includegraphics{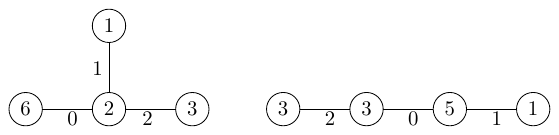}
    \caption{Distinct $2$-cells of $\T_{12, 0}$ which have the same ordered list of $1$-dimensional faces, corresponding to distinct codimension-$3$ strata of the normalization of the boundary of $\Mbar_{0, 0}(\P^r, 12)$ which are not determined by their local branches. The vertex-weighted trees represent dual graphs of stable maps of degree $12$: each vertex corresponds to an irreducible component of the source curve, and the integer weights record the degree of the map on the corresponding component.}
    \label{fig:contraction_deck_counterexmp}
\end{figure}

To get around this issue, we first prove that any element of $\Aut(\T_{d, n})$ must preserve a certain highly symmetric cell, which then implies that the set of degrees of a stable map along irreducible components of the source curve is preserved under automorphism of the dual complex. We then show that for any $p \geq 1$, a $p$-cell of $\T_{d, n}$ can be reconstructed from its ordered list of $(p-1)$-dimensional facets, together with this set of degrees. This reconstruction result then implies that $\Aut(\T_{d, n})$ injects into the set of permutations of its $0$-skeleton. A combinatorial analysis of the $1$-skeleton then allows us to conclude the result. For the $d = 1$ case, we first give an isomorphism
\begin{equation}\label{eqn:deg1}
    \T_{1, n} \cong \mathrm{Cone}(\T_{0, n+1}).
\end{equation}
The complex $\T_{0, n+1}$ is the dual complex of the boundary divisor in the Deligne--Mumford--Knudsen moduli space $\Mbar_{0, n+1}$. By proving that any automorphism of $\T_{1, n}$ preserves the cone vertex, we conclude that $\Aut(\T_{1, n}) \cong \Aut(\T_{0, n+1})$. The latter group was proven to be isomorphic to $\bbS_{n + 1}$ by Abreu--Pacini \cite{AbreuPacini}. Massarenti \cite{MassarentiFM} studied the automorphism group of $X[n]$; following his work, the identification  $\Aut(\T_{1, n}) \cong \bbS_{n + 1}$ implies that in general, automorphisms of the dual complex of the Fulton--MacPherson boundary do not extend to automorphisms of the whole space. This stands in contrast to $\Mbar_{g, n}$; for almost all $g$ and $n$, the automorphism groups of both $\Mbar_{g, n}$ and the dual complex of the boundary are given by $\bbS_n$ \cite{Massarenti, autdeltagn}.

Another consequence of (\ref{eqn:deg1}) is that the dual complex of any normal-crossings compactification of the configuration space $\Conf_n(X)$ is contractible for $n \geq 2$. This in particular implies that the weight-zero compactly supported-cohomology of $\Conf_n(X)$ vanishes for smooth, proper, and connected $X$; we expect that this fact is well-known to experts, but we include it as Corollary \ref{cor:wt0vanishing} for completeness.

\subsection{Related work} The dual complex $\T_{d, n}$ is closely related to familiar simplicial complexes in combinatorics, geometry, and topology:
\begin{itemize}
    \item When $d = 0$, the complex $\T_{0, n+1}$ is the link of the cone complex of phylogenetic trees, studied in e.g. \cite{BHV, RobinsonWhitehouse}, and is also known as the moduli space of $(n+1)$-pointed tropical curves of genus zero and volume one \cite{cgp}. The automorphism group of this complex has previously been computed by Abreu--Pacini \cite{AbreuPacini}, by Grindstaff \cite{Grindstaff}, and by the second-named author \cite{autdeltagn}.
    \item For $d \geq 0$, the complex $\T_{d, n}$ is obtained from the complex of separating curves on a genus-$d$ surface with $n$ boundary components by taking the quotient with respect to the action of the pure mapping class group $\mathrm{PMod}_{d, n}$. Kida showed the automorphism group of the complex of separating curves is given by the extended mapping class group $\mathrm{Mod}^{\pm}_{d, n}$ \cite{Kida}, in line with analogous results for the full curve complex \cite{Ivanov, Luo}. 
\end{itemize}
The automorphism group of the dual complex of the boundary divisor in $\Mbar_{g, n}$ was computed to be $\bbS_n$ for $g > 0$ in \cite{autdeltagn}. The automorphism group of the dual complex of the normal-crossings divisor of singular curves in  Hassett's compactification $\Mbar_{g, w}$ of $\M_{g, n}$ has been computed for $g > 0$ \cite{AutTropicalHassett}. The complex $\T_{0, n}$ was shown to be a \textit{flag} simplicial complex in \cite{GiansiracusaFlag}, and there has been interest in this property for higher-genus generalizations \cite{CLQFlag}. Our identification with $\T_{1, n}$ with the cone over $\T_{0, n+1}$ shows that $\T_{1, n}$ is also a flag complex. 

In contrast with several of the related complexes discussed above, the space $\T_{d, n}$ is contractible for $d \geq 1$, as was shown in \cite{vzdualcomplex}. On the other hand, $\T_{0, n}$ is homotopy equivalent to a wedge of $(n-2)!$ spheres of dimension $n-4$ \cite{RobinsonWhitehouse}, and the dual complex of the boundary of $\Mbar_{g, n}$ for $g > 0$ famously has extremely rich topology \cite{cgp}.

\subsection*{Outline} In \S\ref{sec:definitions}, we recall the definition of $\T_{d, n}$ as a symmetric $\delta$-complex. We then describe our strategy for computing $\Aut(\T_{d, n})$ in \S\ref{sec:strategy}, and also carry out the first step, which is to prove that degrees of stable maps along irreducible components are preserved by automorphisms. In \S\ref{sec:reconstruction}, we give the reconstruction algorithm which reduces the calculation of $\Aut(\T_{d, n})$ to its action on vertices, and then we compute $\Aut(\T_{d, n})$ for all $d \geq 2$ and $n \geq 0$ in \S\ref{sec:two_vertices}. Finally, we address the $d = 1$ case and discuss the relationship to the Fulton--Macpherson compactification in \S \ref{sec:degree_one}.

\subsection*{Acknowledgment}
SK is supported by NSF DMS-2401850.

\section{The boundary complex}\label{sec:definitions}
 We briefly recall the definition of $\T_{d, n}$. In \cite{vzdualcomplex}, $\T_{d, n}$ was proven to be the dual complex of the boundary divisor of $\Mbar_{0, n}(\P^r, d)$.\footnote{In that paper, the notation $\Delta_{0, n}(d)$ was used for $\T_{d, n}$.} The space $\T_{d, n}$ can be viewed as a moduli space of metric graphs: specifically, it is the moduli space of metric \textit{$(d, n)$-trees}, as defined below.

 \begin{defn}
    For integers $d, n \geq 0$, an \textit{$(d, n)$-tree} $\TT$ is a tuple 
    \[ \TT = (T, \delta_{\TT}, m_{\TT}) \]
    where:
    \begin{itemize}
    \item $T$ is a finite connected tree.
    \item $\delta_{\TT}: V(T) \to \Z_{\geq 0}$ is a function called the \textit{weight function}, and is required to satisfy
    \[ \sum_{v \in V(T)} \delta_{\TT}(v) = d. \]
    \item $m_{\TT} :\{1, \ldots, n\} \to V(T)$ is a function called the \textit{marking function}.
    \end{itemize}
    An $(d, n)$-tree is \textit{stable} if for all vertices $v \in V(T)$ with $\delta(v) = 0$, the inequality
    \[ \mathrm{val}(v) + |m^{-1}_{\TT}(v)| \geq 3 \]
    holds, where $\mathrm{val}(v)$ is the valence of the vertex $v$. 
 \end{defn}

The collection $\Tree_{d, n}$ of stable $(d, n)$-trees can be made into a category in the following way. The morphisms in the category $\Tree_{d, n}$ are defined to be compositions of isomorphisms (which are required to respect the weight and marking functions), and \textit{edge-contractions}, which we now define. We will generally write $E(\TT)$ and $V(\TT)$ for the edge set and vertex set, respectively, of the underlying tree.

\begin{defn}
Let $\TT$ be a stable $(d, n)$-tree, and suppose $e \in E(\TT)$ is an edge of $\TT$, connecting vertices $v_1, v_2 \in V(\TT)$. The \textit{edge-contraction of $\TT$ along $e$}, denoted by $\TT/e$, is the stable $(d, n)$-tree obtained by deleting $e$ from $\TT$ and identifying $v_1$ and $v_2$ into a single vertex $v$, which satisfies
\[ m_{\TT/e}^{-1}(v) = m_{\TT}^{-1}(v_1) \cup m_{\TT}^{-1}(v_2) \]
and
\[\delta_{\TT/e}(v) = \delta_{\TT}(v_1) + \delta_{\TT}(v_2). \]
% We write $c_e: \TT \to \TT/e$ to denote the morphism in the category $\Tree_{d, n}$ corresponding to the edge-contraction along $e$.
\end{defn}
If $\TT \to \TT/e$ is an edge-contraction of $\TT$, then there is a canonical injection
\[ E(\TT/e) \to E(\TT) \]
sending an edge in $\TT/e$ to its preimage in $E(\TT)$. 

\begin{exmp}\label{exmp:2,0}
    One sees that if $d = 2$ and $n = 0$, there are exactly two $(2, 0)$-trees: a single vertex with weight $2$, or a single edge with each vertex of weight $1$.
\end{exmp}
 
The $(d, n)$-trees are precisely the dual trees of genus zero stable maps to $\P^r$. Slightly informally, the space $\T_{d, n}$ can be viewed as the moduli space of metric stable $(d, n)$-trees, where a metric on a stable $(d, n)$-tree $\TT$ is a length function $\ell: E(\TT) \to \R_{\geq 0}$ of total length $1$. 

 A more precise description of $\T_{d, n}$ is given by its definition as a \textit{symmetric $\Delta$-complex} in the sense of \cite{cgp}. Then the \textit{geometric realization functor} of \cite[\S 3]{cgp} will realize $\T_{d, n}$ as a moduli space of metric graphs.
 
 Let us briefly recall the definition of symmetric $\Delta$-complexes. For an integer $p \geq 0$, write
 \[ [p] := \{0, \ldots, p\}. \]

 \begin{defn}
     A symmetric $\Delta$-complex $X$ is a functor
 \[X: I^{\mathrm{op}} \to \mathsf{Set}, \]
 where $I$ is the category whose objects are the finite sets $[p]$ for $p \geq 0$, and whose morphisms are injections. 
 \end{defn}
Symmetric $\Delta$-complexes have \textit{geometric realizations} as topological spaces with cell structures; we will not recall this construction here but instead refer the reader to \cite{cgp}. We now give the precise definition of $\T_{d, n}$ as a symmetric $\Delta$-complex.
\begin{defn}
We define the symmetric $\Delta$-complex
\[ \T_{d, n} : I^{\mathrm{op}} \to \mathsf{Set} \]
as follows: we set
\[ \T_{d, n}[p] = \{(\TT, \omega) \mid \TT \in \mathrm{Ob}(\Tree_{d, n}),\, \omega: E(\TT) \to [p] \text{ a bijection} \}/\sim \]
where $\sim$ is the equivalence relation generated by isomorphisms of edge-labelled pairs. Given an injection
\[ \iota: [q] \to [p], \]
we define
\[\T_{d, n}(\iota) : \T_{d, n}[p] \to \T_{d, n}[q] \]
by sequentially contracting a pair $(\TT, \omega) \in \T_{d, n}[p]$ along the edges $\omega^{-1}(i)$ for $i \notin \iota([q])$, and then taking the unique edge-ordering of the resulting tree which is compatible with $\omega$. 
\end{defn}

For ease of notation, given an injection $\iota: [q] \to [p]$, we will write $\iota^*$ for the set map
\[ \T_{d, n}(\iota) : \T_{d, n}[p] \to \T_{d, n}[q]. \]

Given $j \in [p]$, we define $\iota_j: [p - 1] \to [p]$ to be the unique order-preserving injection which misses $j$. For $(\TT, \omega) \in \T_{d, n}[p]$, we set the notation
\[ (\TT_j, \omega_j):= \iota_j^*(\TT, \omega). \]
We will compute the automorphism group of $\T_{d, n}$ in the category of symmetric $\Delta$-complexes. To be precise, a morphism of symmetric $\Delta$-complexes is a natural transformation of functors. We spell out what this means for automorphisms of $\T_{d, n}$. Note that if we set
\[\mathfrak{S}_p := \Aut([p]) \cong \bbS_{p + 1}, \]
then each $\kappa \in \mathfrak{S}_p$ defines a permutation
\[ \kappa^*: \T_{d, n}[p] \to \T_{d, n}[p]. \]
\begin{defn}
An \textit{automorphism} $\varphi$ of $\T_{d, n}$ is a collection of permutations
\[ \varphi_p: \T_{d, n}[p] \to \T_{d, n}[p] \]
for each $p \geq 0$, such that
\begin{enumerate}
    \item for all $\kappa \in \mathfrak{S}_{p}$, the diagram
    \[ \begin{tikzcd}
&\T_{d, n}[p] \arrow[d, "\kappa^*"] \arrow[r, "\varphi_p"] &\T_{d, n}[p] \arrow[d, "\kappa^*"]\\
&\T_{d, n}[p] \arrow[r, "\varphi_{p}"] &\T_{d, n}[p]
\end{tikzcd}\]
commutes, and
\item for each $p \geq 1$ and $j \in [p]$, the diagram
\[ \begin{tikzcd}
&\T_{d, n}[p] \arrow[d, "\iota_j^*"] \arrow[r, "\varphi_p"] &\T_{d, n}[p] \arrow[d, "\iota_j^*"]\\
&\T_{d, n}[p-1] \arrow[r, "\varphi_{p - 1}"] &\T_{d, n}[p-1]
\end{tikzcd}\]
commutes.
\end{enumerate}
\end{defn}

Given $\varphi \in \Aut(\T_{d, n})$, and $\TT \in \Tree_{d, n}$, we write $\varphi \TT$ for the underlying tree of the edge-labelled tree $\varphi(\TT, \omega)$ for any edge-labelling of $\TT$. One checks easily that $\varphi \TT$ is well-defined. Note that $\varphi$ also induces a biection $E(\TT) \to E(\varphi\TT)$, and there is a well-defined edge $\varphi e \in E(\varphi\TT)$ for each $e \in E(\TT)$. It is worth noting that automorphisms of $\T_{d, n}$ are exactly natural isomorphisms of functors $\T_{d, n} \to \T_{d, n}$.
\begin{exmp}\label{exmp:aut2,0}
    Unwinding the definition, one sees that $\Aut(\T_{2, 0})$ is trivial by Example \ref{exmp:2,0}.
\end{exmp}
\section{A strategy for computing $\Aut(\T_{d, n})$}\label{sec:strategy}
If we set
\[\T^{(p)}_{d, n} \subseteq \T_{d, n} \]
for the $p$-skeleton of $\T_{d, n}$ (i.e., the symmetric $\Delta$-complex obtained by taking the same $q$-cells of $\T_{d, n}$ for $q \leq p$, but which does not have any $q$-cells for $q > p$), then we get a restriction homomorphism
\begin{equation}\label{eqn:restrict}
    \Aut(\T_{d, n}) \to \Aut(\T_{d, n}^{(p)}) 
\end{equation}
for all $p \geq 0$. In order to prove that $\Aut(\T_{d, n}) = \bbS_n$, we will first prove that (\ref{eqn:restrict}) is injective for each $p \geq 0$. This fact would be clear if $\T_{d, n}$ were an ordinary simplicial complex, but it is not automatic for symmetric $\Delta$-complexes. We will then prove that when $p = 0$, the image of (\ref{eqn:restrict}) is equal to $\bbS_{n}$.
\subsection{The contraction deck}
For a $p$-simplex $(\TT, \omega) \in \T_{d, n}[p]$, we define the \textit{contraction deck} $\calD_{\TT, \omega}$ to be the set of edge-labelled pairs $(\TT_j, \omega_j^\star)$, where $\TT_j = \TT/\omega^{-1}(j)$, and
\[ \omega_j^\star: E(\TT_j) \to [p]\smallsetminus \{j\}  \]
is the bijection induced by
\[ E(\TT_j) \to E(\TT) \xrightarrow{\omega}[p]. \]
Thus $\calD_{\TT, \omega}$ is exactly the data of the ordered list of facets of $(\TT, \omega)$, with a slightly altered edge-labelling convention which is easier to work with: in particular, the $j$th facet is the unique element of $\calD_{\TT, \omega}$ whose set of edge-labels does not contain $j \in [p]$.

For any $\varphi \in \Aut(\T_{d, n})$ we  have
\[\varphi(\calD_{\TT, \omega}) = \calD_{\varphi(\TT, \omega)}\]
by definition. Therefore, if every $p$-simplex in $\T_{d, n}$ were uniquely determined by $\calD_{\TT, \omega}$, then the restriction map
\[ \T^{(p)}_{d, n}  \to \T_{d, n}^{(p-1)}\]
would be an injection. Unfortunately, this is not always true: there exist distinct $p$-simplices with the same ordered list of facets, as in Figure \ref{fig:contraction_deck_counterexmp}. To get around this, we proceed as follows: first, for $\TT \in \Tree_{d, n}$, we define $\calW_{\TT}$ to be the \textit{multiset} of node weights; that is $\calW_{\TT} \subseteq (\Z_{\geq 0})^2$ is the set of ordered pairs $(j, k)$ where $j$ is a vertex weight of $\TT$, and $k$ is the number of vertices of $\TT$ with weight $j$. We will show that 
\[ \calW_{\TT} = \calW_{\varphi\TT} \]
for any $\TT$. Finally, we will show that together, the invariants $\calD_{\TT, \omega}$ and $\calW_{\TT}$ determine $(\TT, \omega) \in \T_{d, n}[p]$ uniquely, as long as $p \geq 1$.
\subsection{Preservation of vertex weights}
To show that $\calW_{\TT}$ is preserved under automorphisms, we first find a simplex that is ``most symmetric" in $\T_{d, n}$, which must be fixed under automorphism. We define $\mathbf{C} \in \Tree_{d, n}$ to be a decorated \textit{star graph} with $d$ edges: this means the underlying tree of $\mathbf{C}$ consists of a central vertex $v$ connected by $d$ distinct edges to $d$ distinct leaf vertices. Then $\mathbf{C}$ is determined by requiring that each leaf vertex has weight $1$, and that all of the markings supported on the central vertex (here we assume that $d \geq 2$ and $n \geq 1$ if $d = 2$). Then all edge-orderings of $\mathbf{C}$ are equivalent, since $\Aut(\mathbf{C})$ acts strongly transitively on $E(\mathbf{C})$. It is also straightforward to see that for $d \geq 2$, $\mathbf{C}$ is the unique tree with $d$ edges in $\Tree_{d, n}$ which has this property. Therefore, picking arbitrarily an edge-ordering $\omega$, we have the following lemma. 
\begin{lem}
Suppose $d \geq 2$ and $n \geq 0$, with $n \geq 1$ if $d = 2$. Then for any $\varphi \in \Aut(\T_{d, n})$, we have
\[ \varphi(\mathbf{C}, \omega) = (\mathbf{C}, \omega). \]
\end{lem}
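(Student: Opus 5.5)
The strategy is to show that $(\mathbf{C},\omega)$ is characterized, among all $(d-1)$-cells of $\T_{d,n}$, by a property that any automorphism preserves. I will use the property that the cell is fixed by every element of $\mathfrak{S}_{d-1}$. First I identify the top dimension. A stable $(d,n)$-tree has at most $d+n-3+\ldots$ edges in general, but here I only need that $\mathbf{C}$ has exactly $d$ edges, so it gives a cell in $\T_{d,n}[d-1]$. For a cell $(\TT,\omega)\in \T_{d,n}[p]$ and $\kappa\in\mathfrak{S}_p$, the cell $\kappa^*(\TT,\omega)$ is represented by $(\TT,\kappa^{-1}\circ\omega)$ (up to the direction convention). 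So $\kappa^*(\TT,\omega)=(\TT,\omega)$ holds exactly when some automorphism of $\TT$ induces the permutation $\omega^{-1}\kappa^{\mp1}\omega$ on $E(\TT)$. Consequently $(\TT,\omega)$ is fixed by all of $\mathfrak{S}_p$ if and only if $\Aut(\TT)$ surjects onto the full symmetric group of $E(\TT)$.

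By the discussion preceding the lemma, $\mathbf{C}$ is the unique tree in $\Tree_{d,n}$ with $d$ edges having this property. Since all edge-orderings of $\mathbf{C}$ are equivalent, it contributes exactly one $(d-1)$-cell, and that cell is the unique element of $\T_{d,n}[d-1]$ fixed by every $\kappa^*$.

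Now let $\varphi\in\Aut(\T_{d,n})$. By condition (1) in the definition of automorphism, $\kappa^*\varphi_{d-1}(\mathbf{C},\omega)=\varphi_{d-1}\kappa^*(\mathbf{C},\omega)=\varphi_{d-1}(\mathbf{C},\omega)$ for every $\kappa\in\mathfrak{S}_{d-1}$. Hence $\varphi_{d-1}(\mathbf{C},\omega)$ is also fixed by all of $\mathfrak{S}_{d-1}$, and by uniqueness it equals $(\mathbf{C},\omega)$.

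The main obstacle is the uniqueness claim, which the paper asserts as "straightforward", so I would verify it. Suppose $\TT$ has $d\ge 2$ edges and $\Aut(\TT)$ acts as the full symmetric group on $E(\TT)$. If $d\ge3$, transitivity on unordered pairs of edges forces every two edges to be adjacent, which for a tree means $\TT$ is a star. If $d=2$, $\TT$ is a path with two edges, and therefore also a star. The full symmetry of the edges forces all leaves to be equivalent, so they all have the same weight $w$ and the same marking set. No two leaves can share a marking, since $m$ is a function and a marking lives on a single vertex. So if some leaf carried a marking, all $d$ leaves would carry that same marking, which is impossible for $d\ge2$; hence all markings lie on the center. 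A leaf with weight $0$ and no markings has valence $1$, which violates stability, so $w\ge1$. The weights sum to $d$ and there are $d$ leaves, so $w=1$ and the center has weight $0$. Therefore $\TT=\mathbf{C}$, and $\mathbf{C}$ is stable precisely under the hypothesis that $n\ge1$ when $d=2$, since then the center has valence $2$ plus at least one marking.
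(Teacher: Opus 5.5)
Your proposal is correct and follows the paper's route. Automorphisms commute with the $\mathfrak{S}_{d-1}$-action, so they permute the $(d-1)$-cells fixed by all of $\mathfrak{S}_{d-1}$. The only such cell is $(\mathbf{C},\omega)$, because $\mathbf{C}$ is the unique $d$-edge tree whose automorphisms realize every permutation of its edges. Your check of that uniqueness claim (star shape from edge-pair transitivity, markings forced onto the center because automorphisms fix marked vertices, leaf weights forced to be $1$ by stability and the degree count) supplies the details the paper calls straightforward.
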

Since $(\mathbf{C}, \omega)$ is preserved under automorphism, the same is true of any of its faces. In particular, let $\B_{1, \varnothing} \in \T_{d, n}[0]$ be the unique vertex of $\T_{d, n}$ which is contained in the cell $(\mathbf{C},\omega)$. The underlying tree of $\B_{1, \varnothing}$ is a single edge connecting two vertices, one of which is of weight one and does not support any markings, so that the other vertex is of weight $d - 1$, and supports all of the markings in $\{1, \ldots, n\}$. Then $\varphi \B_{1, \varnothing} = \B_{1, \varnothing}$. We will conflate $\B_{1, \varnothing}$ and its underlying tree, since it has a unique edge-labelling. Since $\B_{1, \varnothing}$ is the unique vertex of $(\mathbf{C}, \omega)$, we immediately derive the following lemma.
\begin{lem}
For any $\varphi \in \Aut(\T_{d, n})$, we have
\[\varphi \B_{1, \varnothing} = \B_{1, \varnothing}.  \]
\end{lem}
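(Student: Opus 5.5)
The plan is to deduce this from the preceding lemma, which says that $\varphi(\mathbf{C}, \omega) = (\mathbf{C}, \omega)$ for every $\varphi \in \Aut(\T_{d, n})$, together with the compatibility of $\varphi$ with face maps. The only real content is to check that every $0$-dimensional face of $(\mathbf{C}, \omega)$ is the vertex $\B_{1, \varnothing}$, so that this vertex is canonically attached to $(\mathbf{C},\omega)$.

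First, I will compute the vertices of $(\mathbf{C}, \omega)$. A $0$-dimensional face is $\iota^*(\mathbf{C}, \omega)$ for an injection $\iota: [0] \to [d-1]$; it is obtained by contracting all edges of $\mathbf{C}$ except one edge $e$. Contracting the other $d-1$ edges merges the central vertex with $d-1$ leaves. The result is a single edge joining two vertices:
\begin{itemize}
\item the surviving leaf, which has weight $1$ and carries no markings;
\item the merged vertex, which has weight $d-1$ and carries all markings $\{1, \ldots, n\}$.
\end{itemize}
This is exactly $\B_{1, \varnothing}$. Since it does not depend on the choice of $e$, every vertex of the cell $(\mathbf{C}, \omega)$ equals $\B_{1, \varnothing}$. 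We should also confirm that this tree is stable. When $d \geq 2$, the merged vertex has weight $d-1 \geq 1$, so stability is automatic; in the case $d=2$, the hypothesis $n \geq 1$ of the previous lemma is what makes $\mathbf{C}$ itself stable.

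Second, I will apply naturality. For any injection $\iota: [0] \to [d-1]$, condition (2) in the definition of automorphism, iterated (together with condition (1) to handle arbitrary injections, since every injection factors as a permutation followed by compositions of the $\iota_j$), gives
\[ \varphi_0 \, \iota^* = \iota^* \, \varphi_{d-1}. \]
Hence
\[ \varphi \B_{1, \varnothing} = \varphi_0\bigl(\iota^*(\mathbf{C}, \omega)\bigr) = \iota^*\bigl(\varphi_{d-1}(\mathbf{C}, \omega)\bigr) = \iota^*(\mathbf{C}, \omega) = \B_{1, \varnothing}, \]
using the previous lemma in the third equality.

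The only potential obstacle is the factorization claim for injections $[0] \to [p]$ in terms of the generating maps $\kappa$ and $\iota_j$. This is routine: $\iota$ is determined by its image point $i$, and it equals $\iota_{j_1} \circ \cdots \circ \iota_{j_{p}}$ precomposed or postcomposed with a permutation sending $0$ to $i$. Therefore no serious difficulty is expected. The hypotheses are exactly those of the previous lemma; the edge case $d=2$, $n=0$ is excluded, consistent with Example \ref{exmp:aut2,0}.
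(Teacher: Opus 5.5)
Your proposal is correct and follows the paper's argument exactly: $(\mathbf{C},\omega)$ is fixed by the previous lemma, and since every vertex of that cell is $\B_{1,\varnothing}$, naturality of $\varphi$ with respect to face maps forces $\varphi \B_{1,\varnothing} = \B_{1,\varnothing}$. You additionally spell out the face computation and the factorization of injections, which the paper leaves implicit; note also that in the case $d = 2$, $n = 0$ (where the previous lemma does not apply) the statement holds trivially, since $\B_{1,\varnothing}$ is the only cell of $\T_{2,0}$.
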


\begin{defn}
Let $\TT \in \Tree_{d, n}$. An edge $e \in E(\TT)$ is called a \textit{$1$-end} if it separates an unmarked leaf of weight one from the rest of $\TT$.
\end{defn}

An edge $e \in E(\TT)$ is a $1$-end if and only if the graph obtained from $\TT$ by contracting all edges except for $e$ is equal to $\B_{1, \varnothing}$. Since $\B_{1, \varnothing}$ is preserved by $\Aut(\T_{d, n})$, we know that $1$-ends are preserved:
\begin{lem}
Let $\TT \in \Tree_{d, n}$, and suppose $e \in E(\TT)$ is a $1$-end of $\TT$. Then $\varphi e$ is a $1$-end of $\varphi \TT$. 
\end{lem}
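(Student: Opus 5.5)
The plan is to make precise the characterization stated just before the lemma, and then use naturality of $\varphi$ with respect to face maps. Fix an edge-labelling $\omega$ of $\TT$ with $p+1 = |E(\TT)|$ edges, and set $j = \omega(e)$. Let $\iota\colon [0] \to [p]$ be the injection with image $\{j\}$. Then $\iota^*(\TT,\omega)$ is the tree obtained from $\TT$ by contracting every edge except $e$.

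The first step is the combinatorial claim: $e$ is a $1$-end if and only if $\iota^*(\TT,\omega) = \B_{1,\varnothing}$. Contracting all edges other than $e$ collapses each of the two components of $\TT \smallsetminus e$ to a single vertex. The weight of that vertex is the total weight of its component, and its markings are all markings in that component.

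For the forward direction, suppose $e$ is a $1$-end. Then one side is a single unmarked leaf of weight $1$, and the other side carries weight $d-1$ together with all $n$ markings. The result is $\B_{1,\varnothing}$.

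For the converse, suppose the contraction equals $\B_{1,\varnothing}$. Then one component $K$ of $\TT\smallsetminus e$ has total weight $1$ and no markings. I must show that $K$ is a single vertex. Suppose not; then $K$ is a tree with at least two vertices, so it has at least two leaves as a tree in its own right. At most one of these is incident to $e$, so some leaf $u$ of $K$ is also a leaf of $\TT$. This $u$ is unmarked, so stability forces $\delta(u)\ge 1$, and hence $\delta(u) = 1$. Any other leaf $u'$ of $K$ not incident to $e$ would also need weight $\ge 1$, which would push the total weight above $1$.

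It remains to handle the case where the only other leaf $u'$ of $K$ is incident to $e$, so that $K$ is a path from $u$ to $u'$. Every vertex of $K$ other than $u$ then has weight $0$. The vertex $u'$ has valence $2$ in $\TT$ (one edge to $e$, one inside $K$), and each interior vertex of the path also has valence $2$. Each of these is therefore an unmarked weight-$0$ vertex of valence $2$, violating stability. So $K$ is a single vertex $u$, which is unmarked of weight $1$, and $e$ is a $1$-end. This verification is the only step I expect to need any care.

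The second step is naturality. Any injection $[0]\to[p]$ factors as a permutation followed by a composite of the maps $\iota_k$, and $\varphi$ commutes with both kinds of map, so $\varphi_0\,\iota^* = \iota^*\,\varphi_p$. Write $\varphi(\TT,\omega) = (\varphi\TT,\omega')$. By definition $\varphi e = \omega'^{-1}(j)$, so
\[ \iota^*(\varphi\TT,\omega') = \varphi_0\big(\iota^*(\TT,\omega)\big) = \varphi_0(\B_{1,\varnothing}) = \B_{1,\varnothing}, \]
where the last equality uses the previous lemma. Applying the characterization from the first step to $\varphi\TT$ and the edge $\varphi e$ shows that $\varphi e$ is a $1$-end.
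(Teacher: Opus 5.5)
Your proof is correct and takes the same route as the paper: characterize $1$-ends as the edges $e$ for which contracting all other edges yields $\B_{1,\varnothing}$, then use naturality together with the preceding lemma that every automorphism fixes $\B_{1,\varnothing}$. The paper only asserts that characterization; you also supply the stability argument for its converse direction and the check that $\varphi$ commutes with $\iota^*$, both of which the paper leaves implicit.
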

The key result which allows us to deduce that the multiset of weights of any $\TT \in \Tree_{d, n}$ is preserved is given by the following proposition.
\begin{prop}\label{prop:1endclups}
    Let $\TT \in \Tree_{d, n}$. Then $e_0$ and $e_1$ are $1$-ends of $\TT$ which share a vertex if and only if $\varphi e_0$ and $\varphi e_1 \in E(\TT)$ are also $1$-ends sharing a vertex.
\end{prop}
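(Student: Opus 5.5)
The plan is to reduce the statement to a finite check on small cells, using only that $\varphi$ commutes with face maps and that $1$-ends are preserved (the preceding lemma). The direction ``only if'' suffices: applying it to $\varphi^{-1}$ gives the converse. First observe that contracting all edges other than $e_0,e_1$ cannot detect adjacency. Whether or not $e_0$ and $e_1$ share a vertex, that contraction produces the same $2$-edge tree: a centre of weight $d-2$ carrying all markings, joined to two unmarked weight-one leaves. So adjacency must be detected through a third edge. The basic observation is that $e_0,e_1$ do \emph{not} share a vertex if and only if there is an edge $f$ on the path between them. In that case contracting all edges except $e_0,f,e_1$ gives a $3$-edge path $\mathbf{P}$ with $f$ in the middle and unmarked weight-one leaves at both ends. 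If instead $e_0,e_1$ share a vertex, every third edge $f$ produces a $3$-edge tree in which $e_0,e_1$ still share a vertex.

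Step 1 is a reduction by induction on $|E(\TT)|$ using the contraction deck. If $|E(\TT)|=2$, two $1$-ends always share the middle vertex, and the same holds in $\varphi\TT$, which also has two edges. If $|E(\TT)|\ge 3$ and $e_0,e_1$ share a vertex, then for every facet $\TT/g$ with $g\neq e_0,e_1$ the edges $e_0,e_1$ remain $1$-ends sharing a vertex. Since $\varphi(\TT/g)=\varphi\TT/\varphi g$ and $\varphi$ is compatible with the induced edge bijections, induction shows that $\varphi e_0,\varphi e_1$ share a vertex in every facet $\varphi\TT/h$ with $h\ne\varphi e_0,\varphi e_1$. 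Now suppose $\varphi e_0,\varphi e_1$ did not share a vertex in $\varphi\TT$. If $|E(\varphi\TT)|\ge 4$, there is an edge $h$ whose contraction keeps them non-adjacent. Either some edge lies off the path between them, or the path has at least two edges and we contract one of them. This contradicts the previous sentence. So everything reduces to the base case $|E(\TT)|=3$.

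Step 2 is the base case, which I expect to be the main obstacle. Here $\TT$ is a $2$-cell containing two $1$-ends $e_0,e_1$ at a common vertex $c$, together with a third edge $f$. We must rule out that $\varphi\TT$ is a path $\mathbf{P}$ in which $\varphi e_0,\varphi e_1$ are the two end edges. My first tool is symmetry. In $\TT$, the transposition of the labels of $e_0,e_1$ is realised by a tree automorphism swapping the two unmarked weight-one leaves at $c$. By condition (1) in the definition of automorphism, the same transposition must fix $\varphi\TT$, so $\mathbf{P}$ must admit a reflection. The reflection does not by itself exclude this; the decorations on $\mathbf{P}$ are exactly what a reflection-symmetric image could still have. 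Thus $\mathbf{P}$ would be an unmarked path, its two inner vertices having equal weight $(d-2)/2$, and in particular $n=0$ and $d$ is even. To rule out this remaining case I would compare facets.

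For this comparison I use that $1$-ends are preserved. In $\mathbf{P}$, the edge $f'=\varphi f$ is not a $1$-end, and it remains a non-$1$-end in both facets $\mathbf{P}/\varphi e_0$ and $\mathbf{P}/\varphi e_1$. Each of those facets is a path with one weight-one leaf and one leaf of weight $d/2$. In $\TT$, the edge $f$ is either a $1$-end or not; since $\varphi$ preserves $1$-ends in both directions, $f$ cannot be a $1$-end. So $f$ joins $c$ to a subtree of weight $d-2$, and $c$ itself has weight $0$. Then $\TT/e_0$ is the $2$-edge path with ends of weights $1$ and $d-2$, which must coincide with $\mathbf{P}/\varphi e_0$, whose ends have weights $1$ and $d/2$. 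This forces $d-2=d/2$, i.e.\ $d=4$. The single leftover configuration, $d=4$ and $n=0$, I would dispatch by direct enumeration of the $2$-cells of $\T_{4,0}$, using, for example, the deck of the $3$-cells containing them. If this weight comparison turns out too coarse in general, the fallback is to carry out the same symmetry-plus-facet analysis one level up, on $3$-cells obtained by un-contracting an edge.
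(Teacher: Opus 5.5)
Your Step~1 is fine. The facet induction is valid, though the paper does not need it: the swap symmetry you use in Step~2 already settles $|E(\TT)|\ge 4$ directly, because a tree automorphism exchanging two non-adjacent $1$-ends while fixing every other edge forces the tree to be a $3$-edge path.

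The gap is in Step~2, in two places. First, ``$c$ itself has weight $0$'' is unjustified. With $n=0$ and $f$ not a $1$-end, you only learn that the leaf $\ell_f$ at the end of $f$ has $\delta(\ell_f)\ge 2$ and $\delta(c)+\delta(\ell_f)=d-2$. Second, and more seriously, the claim that $\TT/e_0$ ``must coincide with'' $\mathbf{P}/\varphi e_0$ has no basis. The cell $\mathbf{P}/\varphi e_0=\varphi(\TT/e_0)$ is the \emph{image} of that facet, and nothing at this stage says $\varphi$ fixes $1$-cells or preserves vertex weights. Preservation of $\calW_{\TT}$ is exactly the corollary the paper derives \emph{from} this proposition, so using it here is circular.

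Moreover, no comparison of the faces of this single $2$-cell can succeed. Fix any even $d\ge 4$ and $n=0$. Let $\TT$ be the star with centre weight $(d-4)/2$ and leaves of weights $1,1,d/2$. Let $\mathbf{P}$ be the path with weights $1,(d-2)/2,(d-2)/2,1$. Label both so that the $1$-ends get $0,1$ and the remaining edge gets $2$. Their contraction decks are identical: for both, the $0$th and $1$st facets are the path with weights $1,(d-2)/2,d/2$, and the $2$nd facet is the path $1,d-2,1$. For $d=12$ this is the phenomenon of Figure~\ref{fig:contraction_deck_counterexmp}. So your reduction to $d=4$ is false. An exchange of these two cells is invisible to everything your Step~2 uses: faces, $1$-ends, and the $\mathfrak{S}_2$-symmetry.

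The contradiction must therefore come from cells of \emph{higher} dimension. The paper sprouts the $3$-edge path into $\TT^{\mathrm{sp}}$, which has at least $4$ edges, applies the symmetry argument there, and contracts back. In your setup it is even quicker:
\begin{enumerate}
\item Since $\delta(\ell_f)\ge 2$, split off a weight-one leaf from $\ell_f$ via a new edge $g$. This gives a $4$-edge tree $\TT^+$ with $\TT^+/g=\TT$, in which $e_0,e_1$ are still $1$-ends exchanged by an automorphism fixing all other edges.
\item The symmetry argument with $4$ edges makes $\varphi e_0,\varphi e_1$ adjacent in $\varphi\TT^+$.
\item Contracting $\varphi g$ keeps them adjacent in $\varphi\TT=\varphi\TT^+/\varphi g$, a contradiction.
\end{enumerate}
Your ``fallback'' points in this direction, but as written the base case, on which all of Step~1 rests, is not proved.
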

\begin{proof}
Let $\tau$ be an edge-labelling of $\TT$, and define
\[ (\varphi \TT, \varphi \tau): = \varphi (\TT, \tau), \]
so $\varphi \tau$ is some edge-labelling of $\varphi \TT$ witnessing the image of the cell $(\TT, \tau)$ under $\varphi$. We can suppose without loss of generality that $\tau^{-1}(0) = e_0$ and $\tau^{-1}(1) = e_1$. By the preceding lemma we must have that $\varphi e_0$ and $\varphi e_1$ are $1$-ends of $\varphi \TT$. Moreover, since $e_0$ and $e_1$ can be exchanged under an automorphism of $\TT$ that fixes every other edge, $\varphi e_0$ and $\varphi e_1$ must also have this property. When $|E(\TT)| \geq 4$, this immediately proves that $\varphi e_0$ and $\varphi e_1$ must be $1$-ends supported at the same vertex; the proposition in this case is proved by applying the same argument to $\varphi^{-1}$. 

The only remaining case to to consider is $|E(\TT)| = 3$, and $\TT$ is a path with three edges, such that the two extreme edges are $e_0$ and $e_1$, and $\TT$ has an automorphism which exchanges $e_0$ and $e_1$ and preserves the middle edge, since this is the only way we can have an automorphism exchanging the two $1$-ends and fixing every other edge, without the two $1$-ends beig supported on the same vertex. We need to rule out that $\varphi \TT$ is a star with $3$ edges, supporting $1$-ends $\varphi e_0$ and $\varphi e_1$. 

Suppose for contradiction that $\varphi$ has this property. For $\TT$ to have such an automorphism, it must be that $n = 0$, and if we embed $\TT$ in the plane, then the vertex weights of $\TT$ read $(1, e, e, 1)$ from right to left, where $2e + 2 = d$. Thus $\varphi \TT$ is a star with three edges, such that two leaves have weight $1$, one leaf has weight $e$, and the central vertex has weight $e$. Also, we must have $e > 1$ since otherwise $\varphi\TT$ would have more $1$-ends than $\TT$. Now consider the expansion $\TT^{\mathrm{sp}}$ of $\TT$ obtained by connecting $e$ many $1$-ends to each of the two central vertices, which are then reassigned weight $0$. Then $\TT^{\mathrm{sp}}$ consists of exactly $1$ edge which is not a $1$-end, such that each vertex of this edge supports $e + 1$ many $1$-ends. Since $|E(\TT^{\mathrm{sp}})| \geq 4$, the first part of the proposition implies that $\varphi\TT^{\mathrm{sp}} = \TT^{\mathrm{sp}}$. Now one verifies that $\TT^{\mathrm{sp}}$ has no contraction which is equal to $\varphi\TT$, which is a contradiction. This completes the proof.
\end{proof}

\begin{cor}
For any $\TT \in \Tree_{d, n}$ and $\varphi \in \Aut(\T_{d, n})$, we have \[\calW_{\TT} = \calW_{\varphi \TT}.\]
\end{cor}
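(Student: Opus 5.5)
The idea is to reduce the multiset of weights of $\TT$ to data about $1$-ends in a maximal expansion of $\TT$, and then use that automorphisms respect contractions. Given $\TT \in \Tree_{d, n}$, let $\TT^{\mathrm{sp}}$ be the tree obtained by replacing each vertex $v$ of weight $\delta(v) \geq 1$ with a vertex of weight $0$ to which $\delta(v)$ new $1$-ends are attached. Stability is automatic: a vertex of weight $0$ keeps its valence and markings, and a new weight-$0$ vertex $v$ has valence $\mathrm{val}(v) + \delta(v)$. If $\delta(v) = 1$ this is at least $3$ unless $v$ has total valence plus markings at most $1$, i.e.\ unless $v$ is itself an unmarked weight-one leaf. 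In that case we simply leave $v$ alone, since it already is the end of a $1$-end.

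Contracting all the newly added $1$-ends gives back $\TT$, so $\TT$ is a face of $\TT^{\mathrm{sp}}$. In $\TT^{\mathrm{sp}}$, every vertex of positive weight is an unmarked weight-one leaf. Moreover, for each vertex $v$ of $\TT$ with $\delta(v)=j \ge 1$ there is a \emph{cluster} of $1$-ends. If $v$ was not itself a weight-one leaf, this cluster is the $j$ $1$-ends sharing the new weight-$0$ vertex. If $v$ was a weight-one leaf, the cluster is that single $1$-end, which might also share its inner vertex with other $1$-ends; this degenerate case must be handled. The key relation is
\[
\calW_{\TT} \;\leftrightarrow\; \text{the partition of the $1$-ends of } \TT^{\mathrm{sp}} \text{ into these clusters,}
\]
together with the number of weight-$0$ vertices of $\TT$. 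The latter equals $|V(\TT)|$ minus the number of clusters, and $|V(\TT)| = |E(\TT)|+1$.

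By the lemma on $1$-ends, $\varphi$ carries the $1$-ends of $\TT^{\mathrm{sp}}$ bijectively to the $1$-ends of $\varphi\TT^{\mathrm{sp}}$. By Proposition \ref{prop:1endclups}, it preserves the relation ``two $1$-ends share a vertex''. Since $\varphi$ commutes with face maps, $\varphi\TT$ is the contraction of $\varphi\TT^{\mathrm{sp}}$ along the images of exactly the same edges, i.e.\ along whole groups of $1$-ends sharing a vertex. Contracting a set $S$ of $1$-ends all sharing a vertex $w$ of weight $0$ produces a vertex of weight $|S|$ plus the weight of $w$. A cleaner normalization therefore avoids the degenerate case: define $\TT^{\mathrm{sp}}$ so that the contracted edges always comprise \emph{all} $1$-ends at a vertex. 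To see this, note that contracting every $1$-end of $\TT^{\mathrm{sp}}$ whose shared vertex has no uncontracted $1$-end yields exactly $\TT$, and this description is intrinsic in terms of $1$-ends and their adjacency, hence transported by $\varphi$.

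Once this is set up, $\varphi\TT$ is obtained from $\varphi\TT^{\mathrm{sp}}$ by contracting, at each vertex, the full set of $1$-ends there (of the same sizes as in $\TT^{\mathrm{sp}}$). The remaining vertices of $\varphi\TT^{\mathrm{sp}}$ have weight $0$ or are retained $1$-end leaves of weight one, matching $\TT$ by cardinality count. This gives $\calW_{\varphi\TT} = \calW_{\TT}$.

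The main obstacle is the bookkeeping for weight-one leaves of $\TT$, which cannot be split. We must check that the sets of contracted $1$-ends, and their groupings by shared vertex, are genuinely determined by $\varphi$-invariant data. Proposition \ref{prop:1endclups} applied to both $\varphi$ and $\varphi^{-1}$ gives that the ``same vertex'' equivalence classes of $1$-ends correspond, with equal sizes. It also gives that a contracted set in $\varphi\TT^{\mathrm{sp}}$ is a union of full classes. The count of vertices with weight $0$ then follows since edge counts are preserved.
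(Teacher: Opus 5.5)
Your route is the paper's route: sprout $\TT$, use that $\varphi$ commutes with face maps so that $\varphi\TT$ is the contraction of $\varphi\TT^{\mathrm{sp}}$ along the images of the added $1$-ends, and conclude with Proposition \ref{prop:1endclups}. Your sprouting rule (sprout every positive-weight vertex except unmarked weight-one leaves) is more careful than the paper's criterion ``$\delta(v)>1$, or $\delta(v)=1$ and $\val(v)\geq 2$''. That criterion leaves marked weight-one leaves unsprouted, so their weight is not carried by $1$-ends.

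However, your ``cleaner normalization'' is false and should be dropped. So should the claim that the contracted set in $\varphi\TT^{\mathrm{sp}}$ is a union of full same-vertex classes. Take $d=3$, $n=1$, and $\TT=\B_{1,\varnothing}$, an unmarked weight-one leaf attached to a vertex of weight $2$ carrying the marking. Then $\TT^{\mathrm{sp}}$ is a star whose center carries three $1$-ends, and exactly two of them are contracted to recover $\TT$. Contracting the whole class gives the one-vertex tree. This happens whenever a sprouted vertex of $\TT$ is adjacent to an unmarked weight-one leaf of $\TT$. Your proposed intrinsic description (``contract every $1$-end whose shared vertex has no uncontracted $1$-end'') is circular. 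Nothing intrinsic to the symmetric star distinguishes the two contracted $1$-ends from the third; only the edge-labelling tracks them.

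The claim is not needed. The argument requires two facts.

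(i) The images of the $\delta(v)$ added $1$-ends at a sprouted vertex $v$ all share one vertex $w_v$ of $\varphi\TT^{\mathrm{sp}}$, and $w_v\neq w_{v'}$ for $v\neq v'$. The first part follows from Proposition \ref{prop:1endclups}, since in a tree with at least two edges two $1$-ends can only share their inner endpoints, so sharing is transitive. The second part is the proposition applied to $\varphi^{-1}$.

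(ii) Every positive-weight vertex of $\varphi\TT^{\mathrm{sp}}$ is the leaf of a $1$-end. You assert this without proof. It holds because $\TT^{\mathrm{sp}}$ has exactly $d$ $1$-ends, and the lemma on $1$-ends applied to $\varphi$ and $\varphi^{-1}$ shows $\varphi\TT^{\mathrm{sp}}$ also has $d$. Their $d$ distinct weight-one leaves then exhaust the total weight.

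Given (i) and (ii), each $w_v$ has weight $0$ and becomes a vertex of weight exactly $\delta(v)$ after contraction, whether or not uncontracted $1$-ends remain at $w_v$. The uncontracted $1$-end leaves account for the unmarked weight-one leaves of $\TT$, and every other vertex has weight $0$. Both trees have $|E(\TT)|+1$ vertices, so $\calW_{\varphi\TT}=\calW_{\TT}$.
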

\begin{proof}
Define the \textit{sprouting} $\TT^{\mathrm{sp}}$ of $\TT$ by taking each vertex $v \in V(\TT)$ with either $\delta(v) > 1$ or $\delta_{\TT}(v) = 1$ and $\val(v) \geq 2$, and adding $\delta_{\TT}(v)$ edges with are $1$-ends, all of which share the vertex $v$, which is now of weight zero. Now choose an edge-labellings $\tau^{\mathrm{sp}}$ and $\tau$ of $\TT^{\mathrm{sp}}$ and $\TT$, respectively, so that $\tau$ is obtained from $\tau^{\mathrm{sp}}$ by contracting all of the $1$-ends which were added to $\TT$. Then, contracting the images of these $1$-ends in $\varphi(\TT^{\mathrm{sp}}, \tau^{\mathrm{sp}})$, we obtain $\varphi(\TT, \tau)$. The corollary now follows from Proposition \ref{prop:1endclups}.  
\end{proof}

\section{A reconstruction algorithm}\label{sec:reconstruction}
We now present a simple algorithm for the reconstruction of a cell $(\TT, \omega)$ of $\T_{d, n}$ from its contraction deck $\calD(\TT, \omega)$ together with the multiset of weights $\calW_{\TT}$, under the assumption that $E(\TT) \geq 2$. 
\subsection{Node types}
\begin{defn}
    The \textit{node type} of a vertex $v \in \TT$ is the tuple \[ \mathrm{type}(v):= (\delta_{\TT}(v), \mathrm{deg}(v) + |m^{-1}_{\TT}(v)|).\]
\end{defn}

Let $\prec$ be the lexicographic order on $\Z^{2}$. Suppose $(\TT, \omega) \in \T_{d, n}[p]$ with edge $e \in E(\TT)$ connecting vertices $v_1$ and $v_2$, and let $v_{\mathrm{new}}$ be the vertex replacing $v_1$ and $v_2$ in $\TT/e$. Then, by the stability condition, one checks that there is a strict inequality
\begin{equation}\label{eqn:ineq}
\mathrm{type}(v_i) \prec \mathrm{type}(v_{\mathrm{new}}) \end{equation}
for $i = 1, 2$. Therefore, the first step of the algorithm is to choose, of all of the vertices of trees in the contraction deck, a vertex $v_{\mathrm{max}}$ of a tree which has maximal type, under the lexicographic order. This choice may be non-canonical. Without loss of generality, suppose the chosen vertex is in the tree $\TT_0$. From (\ref{eqn:ineq}), we know that $(\TT, \omega)$ must be obtained from $(\TT_0, \omega_0^\star)$ by expanding $\TT_0$ at $v_{\mathrm{max}}$ and labelling the new edge by $0$. Let $v_1$ and $v_2$ be the preimages of $v_{\mathrm{max}}$ in $\TT$. From $\calW_{\TT}$ we know the unique integers $x, y$ such that $\{\delta_{\TT}(v_1), \delta_{\TT}(v_2) \} = \{x, y\}$. We suppose without loss of generality $\delta_{\TT}(v_1) = x$ and $\delta_{\TT}(v_2) = y$.

\subsection{Partitioning edges and markings} Now let $M = m_{\TT_0}^{-1}(v_{\max})$, and set \[S = \omega_0^\star(\{ e \in E(\TT_0) \mid v_{\max}  \in e \}) \subseteq [p].\] Then $S$ is exactly the set of labellings of edges in $\TT$ which are incident to edge $0$. The cell $(\TT, \omega)$ is uniquely determined by a pair of surjective maps
\[ \alpha: S\to \{v_1, v_2\}, \]
and
\[ \beta: M \to \{v_1, v_2\}, \]
subject to some stability conditions. The map $\alpha$ determines how to split the edges at $v_{\max}$, and the map $\beta$ determines how to split the markings. The problem is now to reconstruct $\alpha$ and $\beta$ from $\calD(\TT, \omega)$ and $\calW_{\TT}$.

For $j \in [p]\smallsetminus \{0\}$, let us denote by  \[S(\TT_j, \omega_j^\star) \subseteq [p] \smallsetminus \{j\}\]
the set of labellings of edges which are incident to edge $0$ in $\TT_j$,
and
\[M(\TT_j, \omega_j)  \subseteq \{1, \ldots, n\}\]
the set of markings which are incident to edge $0$ in $\TT_j$.
\subsubsection{Case I} If we can find $j$ such that
\[S(\TT_j, \omega_j^\star) = S, \]
then the set of edges and marks incident to the vertices of edge $0$ are $\alpha$ and $\beta$ respectively, as edge $j$ in $\TT$ was not incident to either vertex of edge $0$ in $\TT$.
\subsubsection{Case II} Now suppose that for all $j \in [p]\smallsetminus\{0\}$, we have
\[ S(\TT_j, \omega_j^\star) \neq S. \]
This implies that all edges $j \in [p]\smallsetminus\{0\}$ in the original tree $T$ are incident to edge $0,$ so $S = [p] \smallsetminus \{0\}$. Then $\TT_j$ is obtained from $\TT$ by contracting edge $j$ and replacing its incident leaf vertex $v_j$ and some $v$ in edge $0$ with $v_{new}$. The two vertices of edge $0$ in $\TT_j$ can be used to uniquely determine $\delta_{\TT}(v_j).$ Then the markings $m_{\TT}^{-1}(v_j)$ are determined by taking the markings incident to edge $0$ in $\TT_j$ that are not elements of $M.$ 

Finally, using the fact that at least one of $w(v_j) > 0, |m_{\TT}^{-1}(v_j)| > 0$ must be true for $v_j$, we can identify $v_{new}$ in $\TT_j$ among the vertices of edge $0.$ This determines $\TT$ uniquely.

By the reconstruction algorithm, we have now proved the following theorem.
\begin{thm}\label{thm:restriction_inj}
    For $d \geq 2$ and $n \geq 0$, the natural restriction homomorphism
    \[ \Aut(\T_{d, n}) \to \Aut(\T_{d, n}^{(0)}) \]
    to permutations of the $0$-skeleton is an injection.
\end{thm}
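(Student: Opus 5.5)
Suppose $\varphi \in \Aut(\T_{d, n})$ restricts to the identity on $\T_{d, n}[0]$. I will show by induction on $p$ that $\varphi_p$ is the identity on $\T_{d, n}[p]$ for every $p \geq 0$. The base case $p = 0$ is the hypothesis. Injectivity of the restriction homomorphism then follows, since the kernel is trivial.

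For the inductive step, fix $p \geq 1$ and assume $\varphi_{p-1} = \mathrm{id}$. Let $(\TT, \omega) \in \T_{d, n}[p]$ and set $(\TT', \omega') := \varphi(\TT, \omega)$.

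\emph{Facets agree.} Condition (2) in the definition of an automorphism gives
\[ \iota_j^*(\TT', \omega') = \varphi_{p-1}\big(\iota_j^*(\TT, \omega)\big) = \iota_j^*(\TT, \omega) \]
for every $j \in [p]$. So the ordered lists of facets agree. Because the facet indexed by $j$ is exactly the element of the contraction deck whose label set omits $j$, this says $\calD_{\TT', \omega'} = \calD_{\TT, \omega}$ as labelled decks. The change from $\omega_j$ to $\omega_j^\star$ is a fixed relabelling determined by $j$, so it commutes with this identification.

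\emph{Weights agree.} By the Corollary of \S\ref{sec:strategy}, $\calW_{\TT'} = \calW_{\TT}$. That corollary relies on the two Lemmas showing that $(\mathbf{C}, \omega)$ and $\B_{1,\varnothing}$ are fixed. These require $n \geq 1$ when $d = 2$. The case $(d, n) = (2, 0)$ is covered separately by Example \ref{exmp:aut2,0}, where there is nothing to prove.

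\emph{Conclude.} The reconstruction algorithm of \S\ref{sec:reconstruction} takes as input only the pair $(\calD_{\TT,\omega}, \calW_{\TT})$ and, for $p \geq 1$ (so $|E(\TT)| \geq 2$), outputs a unique cell. Applied to both $(\TT, \omega)$ and $(\TT', \omega')$, it receives identical input, so $(\TT', \omega') = (\TT, \omega)$. This completes the induction, and hence $\varphi = \mathrm{id}$.

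The main obstacle is making sure the algorithm is genuinely deterministic, meaning its output does not depend on the non-canonical choices it makes. There are two such choices. The first is the choice of the maximal-type vertex $v_{\max}$; when several vertices tie for maximal type, different choices must yield the same cell. The second is the assignment of the weights $\{x, y\}$ to $v_1, v_2$. This must be pinned down by the deck rather than fixed arbitrarily "without loss of generality."

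To handle this, I would phrase the uniqueness claim directly: any cell whose deck and weight multiset equal the given data must coincide with $(\TT, \omega)$. Concretely:
\begin{itemize}
\item By inequality (\ref{eqn:ineq}), any such cell has an expansion at a vertex of maximal type in some facet.
\item The Case I/II analysis then recovers the splitting maps $\alpha, \beta$ and the weights on the two sides from the remaining facets.
\item In Case I, the weights are read off from a facet $\TT_j$ in which edge $0$ still appears with both of its endpoints intact.
\item In Case II, the weights are read off from the leaf data $\delta_{\TT}(v_j)$.
\end{itemize}
Since this recovery uses only deck data, two cells with the same data coincide.
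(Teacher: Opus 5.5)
\textbf{There is a genuine gap, and it cannot be repaired: the statement is false for $(d,n)=(2,2)$.} Your last step needs the claim that a $p$-cell with $p\ge 1$ is determined by $(\calD_{\TT,\omega},\calW_{\TT})$. That claim is false.

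Write each vertex as (weight, set of markings). In $\T_{2,2}$, let $\TT$ be the path $u - a - b$ with $u=(1,\varnothing)$, $a=(0,\{1\})$, $b=(1,\{2\})$. Label edge $ua$ by $1$ and edge $ab$ by $0$. Let $\TT'$ be its image under the transposition $(12)\in\bbS_2$, so $a=(0,\{2\})$ and $b=(1,\{1\})$.

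\begin{itemize}
\item Both trees are stable.
\item They are not isomorphic, since the middle vertex carries different markings.
\item For both, facet $0$ is $\B_{1,\varnothing}$ and facet $1$ is $\B_{1,\{1\}}=\B_{1,\{2\}}$.
\item Both have $\calW=\{0,1,1\}$.
\end{itemize}

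The $0$-skeleton of $\T_{2,2}$ consists only of $\B_{0,\{1,2\}}$, $\B_{1,\varnothing}$ and $\B_{1,\{1\}}$, and $(12)$ fixes all three. So $(12)$ is a nontrivial element of the kernel of $\Aut(\T_{2,2})\to\Aut(\T^{(0)}_{2,2})$. This agrees with the paper's later claim $\Aut(\T_{2,2})\cong\bbS_2$, but contradicts the statement.

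The failure is exactly where you suspected, in Case II. When the leaf $v_j$ carries no markings, knowing $\delta_{\TT}(v_j)$ and the multiset $\{x,y\}$ does not tell you which endpoint of edge $0$ in $\TT_j$ absorbed $v_j$. In the example, $\{x,y\}=\{0,1\}$ and $\delta_{\TT}(v_j)=1$, so both endpoints $(1,\{1\})$ and $(1,\{2\})$ of edge $0$ in $\TT_1$ are consistent choices. Your proposal follows the paper's route exactly, and the paper's Case II argument has the same defect.

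This is not confined to $(2,2)$. In $\T_{4,1}$, take the paths $(1,\varnothing)-(1,\{1\})-(2,\varnothing)$ and $(1,\varnothing)-(1,\varnothing)-(2,\{1\})$, each with first edge labelled $1$ and second labelled $0$. They are distinct cells with the same ordered facets $(\B_{1,\varnothing},\B_{2,\{1\}})$ and the same $\calW=\{1,1,2\}$.

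So an induction that gets $\varphi_p=\mathrm{id}$ from $\varphi_{p-1}=\mathrm{id}$ using only faces and weights cannot work. The statement must at least exclude $(d,n)=(2,2)$. In the other cases, any proof would need more information, for example the cofaces of a cell and not just its faces.
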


Since $\calW_{\TT}$ is preserved by $\Aut(\T_{d, n})$ for any $\TT$, we immediately get the $n = 0$ case of our main theorem.

\begin{cor}\label{cor:unmarked_case}
    For $d \geq 2$, the automorphism group $\Aut(\T_{d, 0})$ is trivial.
\end{cor}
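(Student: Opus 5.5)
By Theorem \ref{thm:restriction_inj}, an automorphism $\varphi \in \Aut(\T_{d, 0})$ is determined by how it permutes the $0$-cells of $\T_{d, 0}$. It therefore suffices to show that $\varphi$ fixes every vertex of $\T_{d, 0}$. We will do this by showing that the vertices of $\T_{d,0}$ are completely classified by the invariant $\calW$, which the preceding corollary shows is preserved by every automorphism.

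The first step is to describe the $0$-cells. A $0$-cell is a stable $(d, 0)$-tree with exactly one edge, so it consists of two vertices $v_1, v_2$ joined by an edge, with weights $a = \delta(v_1)$ and $b = \delta(v_2)$ satisfying $a + b = d$. Since $n = 0$, there are no markings, and each vertex has valence $1$. Stability then forces $a, b \geq 1$. Up to isomorphism, such a tree is determined by the unordered pair $\{a, b\}$ with $a + b = d$ and $a, b \geq 1$. It has a unique edge-labelling, so the $0$-cells are in bijection with these unordered pairs.

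The second step reads off this pair from the weight multiset. For the $0$-cell $\TT$ corresponding to $\{a,b\}$, the multiset $\calW_{\TT}$ records exactly the weights $a$ and $b$ with multiplicity: it is $\{(a,2)\}$ if $a=b$ and $\{(a,1),(b,1)\}$ otherwise. So $\calW_{\TT}$ determines $\{a,b\}$, and hence determines $\TT$ among the $0$-cells. By the preceding corollary, $\calW_{\varphi\TT} = \calW_{\TT}$. Since $\varphi\TT$ is again a $0$-cell, this gives $\varphi\TT = \TT$. Thus $\varphi$ acts trivially on $\T_{d,0}^{(0)}$, and by injectivity of the restriction map, $\varphi$ is the identity.

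The only place needing care is the hypothesis of the earlier lemmas. The corollary on preservation of $\calW$ rests on the lemma fixing $(\mathbf{C},\omega)$, which assumed $n \geq 1$ when $d = 2$. So the case $d=2$, $n=0$ must be handled separately, and I expect this to be the main (if minor) obstacle. It is covered by Example \ref{exmp:aut2,0}: there $\T_{2,0}$ is a single point, so its automorphism group is trivial. For $d \geq 3$ the argument above applies verbatim.
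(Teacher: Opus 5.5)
Your proposal is correct and is essentially the paper's argument: injectivity of the restriction to the $0$-skeleton (Theorem \ref{thm:restriction_inj}) plus preservation of $\calW$, which separates the one-edge trees $\{a,b\}$ with $a,b \geq 1$ that make up the vertices of $\T_{d,0}$. Your separate treatment of $d=2$ via Example \ref{exmp:aut2,0} is a welcome bit of extra care. The paper's one-line proof passes over the fact that the lemma fixing $(\mathbf{C},\omega)$ needs $n \geq 1$ when $d=2$.
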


\section{Trees with two vertices}\label{sec:two_vertices}
In view of Theorem \ref{thm:restriction_inj}, we now study the action of $\varphi \in \Aut(\T_{d,n})$ on those trees $\G \in \T_{d,n}$ with $|V(\G)| = 2$, when $d \geq 2$. By Corollary \ref{cor:unmarked_case}, we may as well assume that $n > 0$. Let us set some notation for these trees.
\begin{defn}\label{defn:two_vertex_tree}
Suppose given an integer $e \leq d$ and a subset $S \subseteq \{1, \ldots, n\}$; if $e = 0$, we require that $|S| \geq 2$, and if $e = d$, we require that $|S| \leq n-2$. We define
\[ \B_{e, S} \in \T_{d,n}\]
to be the tree with two vertices $v$ and $w$, such that $\delta_{\B_{e, S}}(v) = e$ and $m^{-1}_{\B_{e, S}}(v) = S$. This condition implies that $\delta_{\B_{e, S}}(w) = d-e$, and $m^{-1}_{\B_{e, S}}(w) = S^c$.
\end{defn}

Given $\varphi \in \Aut(\T_{d, n})$, we want to show that there exists $\sigma \in \bbS_n$ such that $\varphi \B_{e, S} = \sigma \B_{e, S}$ for all $\B_{e, S}$. The following lemma is a straightforward calculation. 

\begin{lem}\label{lem:counting}
Let $u_{e, S}$ denote the number of $1$-cells in $\T_{d,n}$ which contain $\B_{e, S}$ as a vertex. Then
\[ u_{e, S} = (e + 1)\cdot 2^{|S|} + (d- e + 1) \cdot 2^{|S^c|} - n - 4. \]
\end{lem}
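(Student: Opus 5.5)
The plan is to count directly. A $1$-cell of $\T_{d,n}$ is a stable $(d,n)$-tree with exactly two edges (a path $v_1 - v_2 - v_3$), and its two vertices are the two single-edge contractions. A $1$-cell contains $\B_{e,S}$ as a vertex exactly when one of its edge-contractions is $\B_{e,S}$. Equivalently, it arises by \emph{expanding} one of the two vertices $v$ or $w$ of $\B_{e,S}$ into an edge. So I will count all such one-step expansions of $\B_{e,S}$, separately at $v$ and at $w$, and add the two counts.

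\textbf{Expanding at $v$.} Here $\delta(v)=e$, $m^{-1}(v)=S$, and $v$ has valence $1$. Splitting $v$ produces an inner vertex $v'$, still adjacent to $w$ and of valence $2$, and an outer leaf $v''$ of valence $1$. The expansion is determined by the weight $a\in\{0,\dots,e\}$ and the marking set $A\subseteq S$ placed on $v''$; then $v'$ gets weight $e-a$ and markings $S\smallsetminus A$. This gives $(e+1)2^{|S|}$ candidates. Stability rules out two kinds of candidate.
\begin{itemize}
\item The leaf $v''$ is unstable when $a=0$ and $|A|\le 1$, which happens for $1+|S|$ choices.
\item The valence-$2$ vertex $v'$ is unstable when $a=e$ and $A=S$, which happens for $1$ choice.
\end{itemize}
These two bad sets are disjoint. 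Indeed, an overlap would force $e=0$ and $|S|\le 1$, and Definition \ref{defn:two_vertex_tree} excludes this. So expanding at $v$ contributes $(e+1)2^{|S|}-|S|-2$. Symmetrically, using $|S^c|\ge 2$ when $e=d$, expanding at $w$ contributes $(d-e+1)2^{|S^c|}-|S^c|-2$. Adding the two and using $|S|+|S^c|=n$ gives the stated formula.

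\textbf{Main obstacle: overcounting.} Distinct expansions must correspond to distinct $1$-cells. Two expansion data at the same vertex give non-isomorphic decorated trees once the edge being contracted is remembered. The delicate case is a single $2$-edge tree in which \emph{both} edge-contractions equal $\B_{e,S}$. If $v_1$ plays the role of $v$ and $v_3$ the role of $w$, then the middle vertex $v_2$ has weight $0$, no markings and valence $2$, which is unstable; so this cannot occur. The remaining possibility is that $v_1$ and $v_3$ both play the role of the same vertex. This forces, for instance, $S=\varnothing$ with $v_1,v_3$ both unmarked of weight $e$. In that case the tree has an automorphism swapping its two edges.

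I will handle this case by reading $u_{e,S}$ as the number of incidences (pairs consisting of a $1$-cell and one of its endpoints equal to $\B_{e,S}$), which is how the vertex-degree count is used later. Each incidence corresponds to exactly one expansion together with its contracted edge, so no correction is needed. I will say this explicitly so the formula remains correct in the symmetric case.
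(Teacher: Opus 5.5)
Your enumeration is correct. At $v$ there are $(e+1)2^{|S|}$ data $(a,A)$. The unstable ones are exactly the $|S|+1$ with $a=0$, $|A|\le 1$, plus the single one with $a=e$, $A=S$, and these two sets are disjoint; the same holds at $w$. This is presumably the ``straightforward calculation'' the paper omits. The gap is in the step you flag yourself: passing from expansion data to $1$-cells. You mishandle it in two ways.

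\textbf{The symmetric tree is not overcounted.} Take the tree whose two contractions both equal $\B_{e,S}$, for instance the path whose vertices carry (weight, markings) $(e,\varnothing)$, $(d-2e,\{1,\dots,n\})$, $(e,\varnothing)$. It arises only from the datum $(b,B)=(e,\varnothing)$ at $w$; the two choices of contracted edge are identified by its automorphism. So it contributes exactly $1$, which is what counting $1$-cells requires, and no reinterpretation is needed. Your reinterpretation actually breaks the formula if incidences are counted through the two face maps: such a cell then gives two incidences but only one expansion. For example, when $d=2$, $n=1$, the unique vertex $\B_{1,\{1\}}$ lies on a unique $1$-cell whose two faces are both $\B_{1,\{1\}}$, and the formula gives $1$. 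In any case, redefining $u_{e,S}$ does not prove the lemma as stated.

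\textbf{You miss the case where injectivity really fails.} An expansion at $v$ and one at $w$ can give the same tree with the \emph{same} contracted edge when $\B_{e,S}$ has an automorphism exchanging $v$ and $w$, that is, when $n=0$ and $d=2e$. There the formula is false. For $d=4$, $n=0$, $e=2$ it gives $2$, but the only $1$-cell containing $\B_{2,\varnothing}$ is the path with weights $1,1,2$. This does not affect the paper, since the section containing the lemma assumes $n>0$. But your argument never uses that hypothesis, so it cannot be complete as written.

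\textbf{How to fix it.} An isomorphism of three-vertex paths fixes the middle vertex. That vertex is $(e-a,S\smallsetminus A)$ for a $v$-expansion and $(d-e-b,S^c\smallsetminus B)$ for a $w$-expansion. This recovers the datum at a fixed vertex directly, with no need to remember the contracted edge. A coincidence between a $v$-expansion and a $w$-expansion forces $A=S$ and $B=S^c$. Comparing the leaves $\{(a,S),(d-e,S^c)\}$ and $\{(e,S),(b,S^c)\}$ then gives either $a=e$, so the middle vertex is unstable, or $S=S^c$, so $n=0$. Hence for $n\ge 1$ the expansion data are in bijection with the $1$-cells containing $\B_{e,S}$, and the formula holds as stated.
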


\begin{cor}
For all $d \geq 2$, we have $\Aut(\T_{d, 1}) \cong \bbS_1$.
\end{cor}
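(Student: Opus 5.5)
Since $\bbS_1$ is trivial, the claim is that every $\varphi \in \Aut(\T_{d,1})$ is the identity. By Theorem \ref{thm:restriction_inj}, it suffices to show that $\varphi$ fixes every $0$-cell of $\T_{d,1}$. The $0$-cells are exactly the two-vertex trees $\B_{e,S}$ of Definition \ref{defn:two_vertex_tree}. With $n=1$, the constraints force $1 \le e \le d$ if $S=\{1\}$, and $0 \le e \le d-1$ if $S=\varnothing$. Since $\B_{e,\{1\}} = \B_{d-e,\varnothing}$, the vertices are in bijection with $\B_{e,\varnothing}$ for $1 \le e \le d-1$, together with $\B_{0,\varnothing}$ if we use the other vertex convention. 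To avoid double counting, I parametrize them by the weight $e$ of the unmarked vertex, where $0 \le e \le d-1$ and $e=0$ is excluded because an unmarked weight-zero leaf is unstable. So the vertices are $\B_{e,\varnothing}$ for $1\le e\le d-1$, plus the tree whose unmarked vertex has weight... a careful count is needed, and the first step is to nail down this list precisely.

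Next, I will use the preserved invariant. By the Corollary in \S\ref{sec:strategy}, $\calW_{\TT}$ is preserved, and in particular $\calW_{\B_{e,\varnothing}}$ is preserved. This multiset is $\{e, d-e\}$, so $\varphi$ either fixes $\B_{e,\varnothing}$ or swaps it with $\B_{d-e,\varnothing}$. The latter is the same as $\B_{e,\{1\}}$: the marking sits on the vertex of weight $e$ instead.

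To rule out the swap, I will use Lemma \ref{lem:counting}. The valence (number of $1$-cells containing a vertex) is preserved by automorphisms. For $n=1$ we get $u_{e,\varnothing} = (e+1) + 2(d-e+1) - 5 = 2d - e - 2$, and $u_{e,\{1\}} = 2(e+1) + (d-e+1) - 5 = d + e - 2$. These agree only when $d = 2e$. In that case $\B_{e,\varnothing}$ and $\B_{e,\{1\}}$ are literally the same tree, since both vertices have weight $e$ and the swap is trivial. Otherwise the counts differ, so $\varphi$ fixes each vertex. The boundary cases (weight $0$ or $d$ on one side) are included as $u_{0,\{1\}}$-type terms, which are distinct from their partners by the same formula, or else have no partner with the same weight multiset.

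Hence $\varphi$ acts trivially on the $0$-skeleton, and Theorem \ref{thm:restriction_inj} gives $\varphi = \mathrm{id}$. The only delicate step is checking that the degenerate cases $e\in\{0,d\}$ and the edge case $d=2$ are covered. For $d=2$, $n=1$, the lemma giving $(\mathbf{C},\omega)$ requires $n\ge1$, which holds, so the weight-preservation corollary is available.
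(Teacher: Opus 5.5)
Your argument is correct and is essentially the paper's: both rest on Lemma~\ref{lem:counting} together with Theorem~\ref{thm:restriction_inj}. The one difference is that the paper observes directly that $u_{e,\{1\}} = d+e-2$ is strictly increasing in $e$, so this count already separates all vertices and your detour through preservation of $\calW_{\TT}$ is unnecessary, though harmless. You should also fix the vertex enumeration, whose stated ranges are wrong: each vertex of a one-edge tree is a leaf carrying at most one marking, so stability forces both weights to be positive. Hence the vertices are exactly $\B_{e,\{1\}}$ for $1 \le e \le d-1$, and the boundary cases $e \in \{0, d\}$ discussed in your last paragraph do not occur.
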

\begin{proof}
Suppose that
\[ \varphi \B_{e, \{1\}} = \B_{e', \{1\}}. \] Lemma \ref{lem:counting} implies
\[ 2(e + 1) + (d - e + 1) = 2(e' + 1) + (d - e' + 1), \]
so $e = e'$, and we conclude that $\Aut(\T_{d, 1}) \cong \bbS_1$ for any $d \geq 2$ (and it is worth noting that $\T_{1, 1}$ is an empty complex). 
\end{proof}

\begin{cor}\label{cor:wt0orbits}
Suppose $d, n \geq 2$ and let $\varphi \in \Aut(\T_{d, n})$. Let $S \subseteq \{1, \ldots, n\}$. Then
\[ \varphi \B_{0, S} = \B_{0, R} \]
for some $R$ with $|S| = |R|$.
\end{cor}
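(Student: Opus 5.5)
We need to show that $\varphi$ sends $\B_{0,S}$ to a two-vertex tree having a weight-zero vertex, and that the number of markings on that vertex is unchanged.

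\textbf{Step 1: the image has a weight-zero vertex.} A two-vertex tree $\B_{e,S}$ has weight multiset $\calW = \{e, d-e\}$. By the Corollary of \S\ref{sec:strategy}, $\calW_{\B_{0,S}} = \calW_{\varphi\B_{0,S}}$. The multiset $\{0,d\}$ therefore forces $\varphi\B_{0,S}$ to be one of the following:
\begin{itemize}
\item $\B_{0,R}$ with $|R|\ge 2$;
\item $\B_{d,R'}$, which is the same tree as $\B_{0,R'^c}$.
\end{itemize}
After relabelling, we may assume $\varphi\B_{0,S} = \B_{0,R}$ for some $R$ with $|R| \ge 2$ and $|R^c| \le n$. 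Since $\varphi$ is a bijection on $0$-cells, it preserves the number of $1$-cells incident to each vertex. Hence $u_{0,S} = u_{0,R}$.

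\textbf{Step 2: compare the counts.} Set $s = |S|$ and $r = |R|$. Lemma \ref{lem:counting} gives
\[ u_{0,S} = 2^{s} + (d+1)\,2^{n-s} - n - 4 . \]
Define $f(x) = 2^x + (d+1)2^{n-x}$ on integers $2 \le x \le n$. The goal is to show $f$ is injective there, so that $f(s) = f(r)$ forces $s = r$.

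Compute the forward difference:
\[ f(x+1) - f(x) = 2^x - (d+1)\,2^{n-x-1}. \]
This is increasing in $x$, so $f$ is strictly decreasing and then strictly increasing. In other words, $f$ is convex and can take the same value at two distinct integers only on opposite sides of its minimum.

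\textbf{Main obstacle: ruling out a symmetric collision.} The convexity argument alone does not exclude $f(s) = f(r)$ with $s \ne r$. Write $A = 2^s$ and $B = 2^r$. Clearing the common factor $A-B$ from
\[ A + (d+1)\frac{2^n}{A} = B + (d+1)\frac{2^n}{B} \]
shows that $f(s) = f(r)$ with $s \ne r$ is equivalent to $AB = (d+1)2^n$, i.e.
\[ 2^{s+r} = (d+1)\,2^n . \]
This requires $d+1$ to be a power of $2$, say $d+1 = 2^k$, and then $s + r = n + k$. Since $d \ge 2$, we have $k \ge 2$, so $s + r \ge n + 2$.

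This case cannot be excluded by counting alone. There I would try one of two approaches:
\begin{itemize}
\item use a finer invariant, namely the multiset of weight multisets $\calW$ of the neighbouring $1$-cells, since vertex weights are preserved;
\item alternatively, use the edge $\B_{0,S}$--$\B_{1,\varnothing}$ when $S^c$ supports weight $d$. Because $\B_{1,\varnothing}$ is fixed, the number of $1$-cells joining $\B_{0,S}$ to $\B_{1,\varnothing}$ can be compared with the corresponding number for $\B_{0,R}$.
\end{itemize}

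For the second approach, a $1$-cell containing both $\B_{0,S}$ and $\B_{1,\varnothing}$ is a three-vertex path. It has weights $0,\,d-1,\,1$, with the $S$-marked vertex of weight $0$ at one end, the unmarked weight-$1$ leaf at the other, and the middle vertex carrying the markings $S^c$. It could also arise from other configurations. I would count these paths as a function of $|S|$, and expect the count to be essentially independent of $|S|$ or to pin down $|S|$. If that fails, I would count the $1$-cells whose other vertex has a weight-$0$ side, that is, the neighbours of the form $\B_{0,T}$. By Lemma \ref{lem:counting}-style reasoning, this number is roughly $2^{s} + 2^{n-s}$-type expressions weighted differently, and it should break the symmetry $s \leftrightarrow n + k - s$. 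This would finish the proof that $|R| = |S|$.
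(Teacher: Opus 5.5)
Your reduction to $2^{|S|+|R|}=(d+1)2^{n}$ is correct, and you are right that Lemma~\ref{lem:counting} alone cannot rule this case out. The paper's proof reaches the same equation and rewrites it as $d+1=2^{k+j-n}$. It then asserts $k+j=n$ without justification. That assertion fails in general: for $d=3$, $n=4$ one has $u_{0,S}=u_{0,R}=12$ when $|S|=2$ and $|R|=4$.

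Your proposal, however, has a genuine gap at the same spot. The case $d+1=2^{k}$, $|S|+|R|=n+k$ is left open, and you only list strategies you ``would try''. Your second strategy gives no information. For every $S$ there is exactly one $1$-cell having both $\B_{0,S}$ and $\B_{1,\varnothing}$ as vertices: the path whose vertices carry (weight, markings) $(0,S)$, $(d-1,S^c)$, $(1,\varnothing)$. So that count is independent of $|S|$.

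Your first strategy does close the gap, and quickly. Every $1$-cell having $\B_{0,S}$ as a vertex comes from splitting one of its two vertices.
\begin{itemize}
\item Splitting the weight-$0$ vertex only produces weight-$0$ pieces, so these $1$-cells have weight multiset $\{0,0,d\}$.
\item Split the weight-$d$ vertex into a leaf of weight $b$ carrying markings $U\subseteq S^c$ and a middle vertex of weight $d-b$ carrying $S^c\smallsetminus U$. This gives weight multiset $\{0,b,d-b\}$. For $1\le b\le d-1$ both new vertices have positive weight, so every $U$ is allowed, and distinct pairs $(b,U)$ give distinct $1$-cells.
\end{itemize}
Hence exactly $(d-1)\,2^{\,n-|S|}$ of the $1$-cells at $\B_{0,S}$ have weight multiset different from $\{0,0,d\}$.

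Since $\varphi$ commutes with face maps, it restricts to a bijection from the $1$-cells at $\B_{0,S}$ to those at $\B_{0,R}$. By the weight-preservation corollary of \S\ref{sec:strategy}, this bijection preserves $\calW$. Therefore $(d-1)2^{n-|S|}=(d-1)2^{n-|R|}$, and $d\ge 2$ gives $|S|=|R|$. This argument does not use Lemma~\ref{lem:counting} at all, and it supplies the step missing from both your write-up and the paper's.
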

\begin{proof}
Since $\varphi$ preserves the set of vertex weights we have $\varphi \B_{0, S} = \B_{0, R}$ for some $R \subseteq \{1, \ldots, n\}$. Suppose towards a contradiction that $|S| = k$ and $|R| = j$, with $k \neq j$. Without loss of generality assume $j >k$. Then by Lemma \ref{lem:counting} we must have
\[ 2^{k} + (d + 1)2^{n - k} = 2^j + (d+1) 2^{n-j},\]
which implies that
\[d + 1 = \frac{2^j - 2^k}{2^{n- k} - 2^{n -j}} = \frac{2^k(2^{j - k }-1)}{2^{n - j}(2^{j - k } - 1)} = \frac{2^k}{2^{n - j}}. \]
But $k + j = n$, so the above implies $d = 0$, which is a contradiction.
\end{proof}

\begin{lem}\label{lem:Kneser}
Suppose $d \geq 2$ and $n \neq 1,2$. Then there exists a unique $\sigma \in \bbS_n$ such that
\[ \varphi \B_{0, \{i,j\}} = \sigma \B_{0, \{i,j\}} \]
for each two-element subset $\{i, j\} \subset \{1, \ldots, n\}$.
\end{lem}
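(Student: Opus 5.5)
The plan is to turn the statement into a classical fact about graphs. By Corollary \ref{cor:wt0orbits}, $\varphi$ permutes the set of vertices $\{\B_{0,\{i,j\}}\}$ among themselves. So $\varphi$ induces a permutation $\pi$ of the two-element subsets of $\{1,\ldots,n\}$, defined by $\varphi\B_{0,\{i,j\}}=\B_{0,\pi\{i,j\}}$. We must show that $\pi$ is induced by a unique $\sigma\in\bbS_n$. Uniqueness is immediate once existence is known, because $\bbS_n$ acts faithfully on two-element subsets when $n\geq 3$. For existence, I will use the fact that $\varphi$ preserves the $1$-skeleton: two vertices are joined by a $1$-cell if and only if their images are.

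The first step is to describe exactly when two vertices $\B_{0,S}$ and $\B_{0,R}$ span a $1$-cell. Such a $1$-cell is a three-vertex tree contracting to both. The weight-$d$ part must sit on the side complementary to both $S$ and $R$. From this I expect that a $1$-cell exists if and only if either $S\cap R=\varnothing$, or one of $S,R$ strictly contains the other, with stability of the intermediate vertex checked in each case. In the nested case the middle vertex has weight $0$, valence $2$, and at least one marking, so it is stable. In the disjoint case the middle vertex carries weight $d>0$. If $S$ and $R$ cross, no common refinement exists. In particular, for $2$-subsets the induced graph is the Kneser graph $K(n,2)$.

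For $n\geq 5$, I would conclude using the classical theorem that $\Aut(K(n,2))\cong\bbS_n$, acting naturally. The case $n=3$ is trivial: the three pairs correspond bijectively to their complementary points, so every permutation of them comes from $\bbS_3$.

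The main obstacle is $n=4$, where $K(4,2)$ is a perfect matching on six vertices and has too many automorphisms. Here I will also use the vertices $\B_{0,T}$ with $|T|=3$. By Corollary \ref{cor:wt0orbits} these are permuted among themselves, say via $T\mapsto \tau(T)$. Writing $T=\{1,2,3,4\}\smallsetminus\{l\}$ defines $\sigma\in\bbS_4$ by $\tau(\{l\}^c)=\{\sigma(l)\}^c$. By the compatibility criterion above, $\B_{0,\{i,j\}}$ and $\B_{0,T}$ are adjacent if and only if $\{i,j\}\subset T$. Each pair is the intersection of exactly the two $3$-sets containing it, and these adjacencies are preserved by $\varphi$. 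Therefore $\pi\{i,j\}$ is the intersection of the $3$-sets $\tau(T)$ over all $T\supset\{i,j\}$, which equals $\sigma\{i,j\}$.

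This inclusion argument in fact works uniformly for all $n\geq 4$, so I might use it in place of the Kneser theorem. To do so, I would first show that $\varphi$ acts on the $(n-1)$-subsets through a permutation of their complementary points, and then run the same intersection argument.
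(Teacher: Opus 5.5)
Your proof is correct. For $n\geq 5$ it is the paper's argument: adjacency among the vertices $\B_{0,\{i,j\}}$ is disjointness of pairs, so $\varphi$ induces an automorphism of the Kneser graph $K(n,2)$. For $n=3$ the paper cites Braun, whereas your remark that pairs correspond to their complementary points already suffices. You also make uniqueness explicit via the faithfulness of $\bbS_n$ on pairs for $n\geq 3$, which the paper leaves implicit.

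The real difference is at $n=4$.
\begin{itemize}
\item The paper notes that two pair-vertices are at distance $2$ in the $1$-skeleton exactly when the pairs share one element. It deduces that the induced permutation preserves intersection sizes, and asserts without details that such permutations form $\bbS_4$ extended by complementation $\{i,j\}\mapsto\{i,j\}^c$. It then rules out complementation by checking which pairs are adjacent to $\B_{0,\{2,3,4\}}$.
\item You instead define $\sigma$ directly from the action on the four vertices $\B_{0,T}$ with $|T|=3$. You then recover each pair as the intersection of the two triples containing it.
\end{itemize}
Both routes use the same inputs: Corollary \ref{cor:wt0orbits} applied to triples, and the fact that $\B_{0,\{i,j\}}$ and $\B_{0,T}$ are adjacent if and only if $\{i,j\}\subset T$. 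Your route, however, skips the distance-$2$ computation and the unproved classification of intersection-preserving permutations, and it produces $\sigma$ constructively.

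Your closing observation is also valid. The $(n-1)$-subsets are in bijection with points, so Corollary \ref{cor:wt0orbits} by itself already gives the action of $\varphi$ on them through some $\sigma\in\bbS_n$; there is nothing further to prove at that step. For $n\geq 4$ the pair $\{i,j\}$ is adjacent to $\B_{0,\{l\}^c}$ exactly when $l\notin\{i,j\}$, since otherwise the two sets cross. Hence the same intersection argument works uniformly for all $n\geq 4$. This makes the lemma self-contained given the corollary and removes the dependence on the Kneser-graph automorphism theorem.
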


\begin{proof}
From Corollary \ref{cor:wt0orbits} we have
\[ \varphi \B_{0, \{i, j\}} = \B_{0, \{k, \ell\}} \]
for some two-element subset $\{k, \ell\}$. Now observe that two vertices of the form $\B_{0, \{i, j\}}$ and $\B_{0, \{k, \ell\}}$ are connected by a $1$-cell in $\T_{d, n}$ if and only if $\{i, j\}$ and $\{k, \ell\}$ are disjoint. This means that any automorphism $\varphi$ of $\T_{d, n}$ must in particular induce a permutation of the two-element subsets $\{i, j\}$ of $\{1, \ldots, n\}$ which preserves disjointness. Such permutations are precisely automorphisms of the \textit{Kneser graph} $K(n, 2)$. The automorphism group of $K(n, 2)$ is known to be $\bbS_n$ when $n \geq 5$ and when $n = 3$ \cite{BraunKneser}; this proves the lemma in these cases.

Now suppose $n = 4$. Notice that $\B_{0, \{k, \ell\}}$ and $\B_{0, \{i, j\}}$ are of distance $2$ in the $1$-skeleton of $\T_{d, 4}$ (meaning the minimal path length between them is $2$) if and only if $\{k, \ell\}$ and $\{i, j\}$ share exactly one element. This means that the action of any $\varphi \in \Aut(\T_{d, 4})$ permutes the elements $\B_{0, \{i, j\}}$ in such a way that the cardinality of the intersection of two $2$-element subsets is preserved. The group of such permutations is checked to be isomorphic to $\bbS_4 \rtimes \bbS_2$, generated by the natural $\bbS_4$ action and the complementation map $\{i, j\} \mapsto \{i, j\}^c$. Thus we only need to rule out that $\varphi$ is acting by the complementation map on the $\B_{0, \{i, j\}}$, so suppose toward a contradiction that this is the case. Consider the vertex $\B_{0, \{2, 3, 4\}}$. By Corollary \ref{cor:wt0orbits}, we have \[\varphi \B_{0, \{2, 3, 4\}} = \B_{0, \{i, j, k\}}\] for some $3$-element subset $\{i, j, k\} \subseteq \{1, 2, 3, 4\}$. But among the $\B_{0, \{k, \ell\}}$, the vertex $\B_{0, \{2,3,4\}}$ is adjacent to exactly $\B_{0, \{2, 3\}}$, $\B_{0, \{2, 4\}}$, and $\B_{0, \{3, 4\}}$. Under the complementation map, this set of vertices is mapped to $\B_{0, \{1, 4\}}$, $\B_{0, \{1, 3\}}$, and $\B_{0, \{1, 2\}}$, but there is no $\B_{0, \{i, j, k\}}$ that is adjacent to exactly these three $\B_{0, \{k, \ell\}}$, which is a contradiction.
\end{proof}

\begin{thm}
    Suppose $d \geq 2$ and $n \neq 1, 2$. Then there exists a unique $\sigma \in \bbS_n$ such that $\varphi \B_{e,S} = \sigma \B_{e, S}$ for all $e$ and $S$. In particular, $\Aut(\T_{d,n}) \cong \bbS_n$ for $d \geq 2$ and $n \geq 4$.
\end{thm}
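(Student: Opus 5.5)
Take $\sigma$ from Lemma \ref{lem:Kneser} and replace $\varphi$ by $\sigma^{-1}\varphi$. It then suffices to show that an automorphism $\varphi$ fixing every $\B_{0,\{i,j\}}$ fixes every vertex $\B_{e,S}$; Theorem \ref{thm:restriction_inj} then gives $\varphi=\mathrm{id}$. Uniqueness of $\sigma$ is already contained in Lemma \ref{lem:Kneser}. The final claim $\Aut(\T_{d,n})\cong\bbS_n$ follows from existence, uniqueness and injectivity of restriction to the $0$-skeleton, together with the evident injection $\bbS_n\hookrightarrow\Aut(\T_{d,n})$ for $n\ge 2$, since the action on the vertices $\B_{0,\{i,j\}}$ is faithful.

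The main tool is a combinatorial characterization of adjacency between two-vertex trees. Two vertices $\B_{e,S}$ and $\B_{e',S'}$ span a $1$-cell exactly when the corresponding splittings are compatible: there is a three-vertex path tree contracting to both. Concretely, one of the following holds, together with the stability conditions on the middle vertex:
\[ S\subseteq S',\ e\le e' \qquad S'\subseteq S,\ e'\le e \qquad S\cap S'=\varnothing,\ e+e'\le d \qquad S\cup S'=[n],\ e+e'\ge d. \]
The last two cases come from comparing with the complementary description $\B_{e,S}=\B_{d-e,S^c}$. By the Corollary to Proposition \ref{prop:1endclups}, $\varphi$ preserves the multiset of weights $\{e,d-e\}$. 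So $\varphi\B_{e,S}=\B_{e,R}$ (up to this relabelling) for some $R$, and it remains to show $R=S$.

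Pin down $R$ through its adjacencies to the fixed vertices $\B_{0,\{i,j\}}$. First assume $0<e<d$, normalized so that $v$ carries weight $e$. Then $\B_{e,S}$ is adjacent to $\B_{0,\{i,j\}}$ if and only if $\{i,j\}\subseteq S$ or $\{i,j\}\subseteq S^c$; stability holds automatically because the middle vertex has positive weight. The set of pairs adjacent to $\B_{e,S}$ is therefore $\binom{S}{2}\cup\binom{S^c}{2}$, which determines the unordered partition $\{S,S^c\}$ whenever $n\ge3$. Hence $R\in\{S,S^c\}$.

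To rule out $R=S^c$ (when $e\ne d-e$ this must be done by weights; when $e=d-e$ the two trees coincide), compare with further vertices. When $|S|\ge2$, check whether $\B_{e,S}$ is adjacent to the weight-$0$ vertices $\B_{0,T}$ with $T\subseteq S$; this uses Corollary \ref{cor:wt0orbits} and an induction on $|T|$ showing that all $\B_{0,T}$ are fixed. For $\B_{0,T}$ with $|T|\ge3$, the same adjacency argument applies: $\B_{0,T}$ is adjacent to $\B_{0,\{i,j\}}$ exactly when $\{i,j\}\subseteq T$ or $\{i,j\}\cap T=\varnothing$. This recovers $\{T,T^c\}$, and then $|T|$ is determined by Corollary \ref{cor:wt0orbits} when $|T|\ne n/2$. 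The cases $|S|\le1$ and $|T|=n/2$ are handled by counting adjacencies with the already-fixed $\B_{0,T}$, or with Lemma \ref{lem:counting}. The vertices with $e=0$ or $e=d$ are the $\B_{0,T}$ themselves and so are covered.

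I expect the main obstacle to be the bookkeeping in this last step. The degenerate situations are $S=\varnothing$, where adjacency to pairs gives only $\binom{[n]}{2}$, and $|S|=1$, together with the symmetric cases $|T|=n/2$ and $e=d/2$. For these I would first try the count from Lemma \ref{lem:counting}, which separates different values of $e$ with the same $|S|$. Failing that, I would use adjacency to the already-fixed $\B_{0,T}$ with $|T|=n-1$, because $\B_{e,\{i\}}$ is adjacent to $\B_{0,[n]\setminus\{i\}}$ in a way that distinguishes $i$. The small case $n=3$ needs a separate direct check.
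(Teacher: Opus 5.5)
\textbf{Verdict: there is a genuine gap.} Your reduction matches the paper: normalize by the $\sigma$ of Lemma \ref{lem:Kneser}, show every vertex is fixed, and finish with Theorem \ref{thm:restriction_inj}. Your first step, $\varphi\B_{e,S}\in\{\B_{e,S},\B_{e,S^c}\}$ via adjacency to the $\B_{0,\{i,j\}}$, also matches. The gap is the step that should exclude $\B_{e,S^c}$ when $0<e<d$ and $e\neq d/2$.

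\textbf{Where it fails.} Lemma \ref{lem:counting} gives
\[ u_{e,S}-u_{e,S^c}=(2e-d)\bigl(2^{|S|}-2^{|S^c|}\bigr). \]
So the cases you flag as degenerate ($S=\varnothing$, $|S|=1$, $n=3$) are already settled by counting. The only open case is $|S|=|S^c|=n/2$, which occurs whenever $d\ge 3$ and $n\ge 4$ is even. In that case none of your tools can work:
\begin{itemize}
\item By your own criterion, for $0<e<d$ the vertex $\B_{e,S}$ is joined to $\B_{0,T}$ if and only if $T\subseteq S$ or $T\subseteq S^c$. This depends only on the unordered partition $\{S,S^c\}$, so adjacency to weight-zero vertices (pairs, $T\subseteq S$, $|T|=n-1$) never separates $\B_{e,S}$ from $\B_{e,S^c}$.
\item The counts agree: $u_{e,S}=u_{e,S^c}$.
\item Likewise, for $e=0$ and $|T|=n/2$, counting adjacencies cannot separate $\B_{0,T}$ from $\B_{0,T^c}$. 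A permutation swapping $T$ and $T^c$ preserves all such counts.
\end{itemize}

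\textbf{The missing idea} is to use adjacency to positive-weight vertices that are already known to be fixed. Write a vertex of a tree as (weight, markings). Using $\B_{e,S}=\B_{d-e,S^c}$, assume $e<d/2$ and pick $x\in S^c$.
\begin{itemize}
\item The vertex $\B_{e,S\cup\{x\}}$ is fixed by your first step and Lemma \ref{lem:counting}, because $|S\cup\{x\}|\neq|S^c\smallsetminus\{x\}|$.
\item It is adjacent to $\B_{e,S}$, via the path $(e,S)$, $(0,\{x\})$, $(d-e,S^c\smallsetminus\{x\})$.
\item It is not adjacent to $\B_{e,S^c}$. Since $|S|=|S^c|\ge 2$, the only possible nesting between a side of $\{(e,S^c),(d-e,S)\}$ and a side of $\{(e,S\cup\{x\}),(d-e,S^c\smallsetminus\{x\})\}$ is $(d-e,S)\le(e,S\cup\{x\})$ or its complement. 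Either requires $d-e\le e$, which fails.
\end{itemize}
For $e=0$ and $|T|=n/2$, argue the same way: $\B_{0,T}$ is adjacent to the fixed vertex $\B_{0,T\cup\{x\}}$, while $\B_{0,T^c}$ is not.

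\textbf{The paper's proof has the same defect here.} Its $1$-cell joins $\B_{e,S}$ to the weight-zero vertex $\B_{0,\{i,j\}}$. But the path $(0,\{k,\ell\})$, $(d-e,\sigma(S)\smallsetminus\{k,\ell\})$, $(e,\sigma(S^c))$ is a legitimate image cell with facets $\B_{e,\sigma(S^c)}$ and $\B_{0,\{k,\ell\}}$. The claimed contradiction therefore tacitly assumes the middle vertex keeps weight $e$. The argument above repairs both proofs.
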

\begin{proof}
A vertex $\B_{e, S} \neq \B_{0, \{i, j\}}$ is connected to the vertex $\B_{0, \{i, j\}}$ by a $1$-cell in $\T_{d, n}$ if and only if $\{i, j\} \subseteq S$ or $\{i, j\} \subseteq S^c$. Therefore, we must have either $\varphi \B_{e, S} = \B_{e, \sigma(S)}$ or $\varphi\B_{e,S}  = \B_{e, \sigma(S^c)}$, where $\sigma \in \bbS_n$ is given by Lemma \ref{lem:Kneser}. If the first case always holds, then we are done. We can study the second case using Lemma \ref{lem:counting}: if $\varphi\B_{e,S}  = \B_{e, \sigma(S^c)}$, then we must have $u_{e, S} = u_{e, S^c}$, so
\[ (e + 1) \cdot 2^{|S|} + (d - e + 1) \cdot 2^{|S^c|} = (e + 1) \cdot 2^{|S^c|} + (d - e + 1) \cdot 2^{|S|}, \]
or equivalently
\[(e + 1) \cdot (2^{|S|} - 2^{|S^c|}) = (d - e + 1)\cdot(2^{|S|} - 2^{|S^c|}).  \]
This equality holds if and only if $|S| = |S^c|$ or $e = d/2$. If $e = d/2$, then $\B_{e, \sigma(S^c)} = \B_{e, \sigma(S)}$, so we need only rule out the case where $e \neq d/2$, $|S| = |S^c|$, and $\varphi\B_{e, S} = \B_{e, \sigma(S^c)}$. Suppose towards a contradiction that this holds. Set $d_1 = e$ and $d_2 = d - e$. At least one of $S$ or $S^c$ is of cardinality at least $2$; suppose without loss of generality that $|S| \geq 2$. Choose some $\{i, j\} \subseteq S$, and consider the $1$-cell $(\TT, \omega)$ of $\T_{d, n}$ given in Figure \ref{fig:1cella}. 
\begin{figure}
    \centering
    \includegraphics{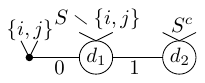}
    \label{fig:1cella}
\end{figure}
Since $(\TT_1, \omega_1) = \B_{0, \{i, j\}}$, we must have that $\varphi(\TT_1, \omega_1) = \B_{0, \{k, \ell\}}$, where \[\{k, \ell\} = \sigma(\{i, j\}). \] But $(\TT_0, \omega_0) = \B_{e, S}$, so $\varphi(\TT_0, \omega_0) = \B_{e, \sigma(S^c)}$. Thus $\varphi(\TT, \omega)$ must be as in Figure \ref{fig:1cellb}, and we must have that $A \cup \{k, \ell\} = \sigma(S^c)$ and $B = \sigma(S)$. In particular, $\{k, \ell\} \subseteq \sigma(S^c)$, which is a contradiction.
\begin{figure}
    \centering
    \includegraphics{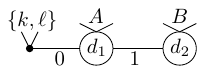}
    \label{fig:1cellb}
\end{figure}
\end{proof}

We conclude by outlining the proof of the $n = 2$ case; the interested reader may fill in the details.

\begin{prop}
For all $d \geq 2$, we have
\[ \Aut(\T_{d, 2}) \cong \bbS_2. \]
\end{prop}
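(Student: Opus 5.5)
By Theorem \ref{thm:restriction_inj}, any $\varphi \in \Aut(\T_{d, 2})$ is determined by its action on the $0$-cells. So it suffices to show that the permutation $\varphi$ induces on vertices agrees with the action of some $\sigma \in \bbS_2$. The vertices of $\T_{d, 2}$ are the two-vertex trees $\B_{e, S}$ of Definition \ref{defn:two_vertex_tree}, where $S \subseteq \{1,2\}$ and $0 \le e \le d$, subject to the stability constraints. The two constraints are that $\B_{0,S}$ forces $S = \{1,2\}$, and that $\B_{d,S}$ forces $S = \varnothing$. Also $\B_{e,S} = \B_{d-e,S^c}$. A convenient normalization is to record each vertex by the pair (weight, markings) of the vertex carrying marking $1$. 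This gives the vertices $\B_{e,\{1,2\}}$ for $0 \le e \le d-1$ and $\B_{e,\{1\}}$ for $0 < e < d$, with the latter reparametrizable as $\B_{d-e,\{2\}}$.

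First, I would use that $\calW$ is preserved (the Corollary to Proposition \ref{prop:1endclups}). This shows $\varphi \B_{e,S}$ has the same unordered weight pair $\{e, d-e\}$. For a fixed unordered pair, the candidate vertices are few: up to four, namely $\B_{e,\{1,2\}}$, $\B_{d-e,\{1,2\}}$, $\B_{e,\{1\}}$ and $\B_{e,\{2\}}$, when these are stable and distinct.

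Next, I would apply Lemma \ref{lem:counting}, which for $n=2$ gives
\[ u_{e,\{1,2\}} = 4(e+1) + (d-e+1) - 6, \qquad u_{e,\{1\}} = 2(e+1) + 2(d-e+1) - 6 = 2d-2. \]
The first is $3e + d - 1$ and is strictly increasing in $e$. It equals $2d-2$ only when $3e = d-1$. So in general $\varphi$ fixes every $\B_{e,\{1,2\}}$ and permutes each pair $\{\B_{e,\{1\}}, \B_{e,\{2\}}\}$.

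The coincidence $3e = d-1$ (equivalently $e' = d-1-e$ for the complementary vertex) needs a separate adjacency argument. Here I would compare, among the $1$-cells at a vertex, how many contain a vertex $\B_{0,\{1,2\}}$, or more generally how many neighbours have each weight pair. For example, $\B_{e,\{1\}}$ is never adjacent to $\B_{0,\{1,2\}}$, since a tree with three vertices refining both would need the two markings on one side. On the other hand, $\B_{e,\{1,2\}}$ with $e<d$ has $\B_{0,\{1,2\}}$ as a neighbour whenever $e \ge 1$. The vertex $\B_{0,\{1,2\}}$ is itself characterized as the unique vertex of weight pair $\{0,d\}$, so it is fixed.

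The main step, and the expected obstacle, is to show the swaps $\B_{e,\{1\}} \leftrightarrow \B_{e,\{2\}}$ are made coherently for all $e$. That is, either $\varphi$ fixes all of them (giving the identity) or swaps all of them (giving the transposition in $\bbS_2$). I would argue via $1$-cells joining $\B_{e,\{1\}}$ and $\B_{e',\{1\}}$. These are three-vertex paths with marking $1$ on one end, marking $2$ on the other, and middle weight $e'-e > 0$, and such a cell exists for $e < e'$. Two vertices $\B_{e,\{1\}}$ and $\B_{e',\{2\}}$ are adjacent only under different weight conditions: the path must have weight $e$ at the $1$-end and $d-e'$ at the $2$-end, which requires $e + (d-e') \le d$ and stability of the middle vertex. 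I would check which pairs $(e,e')$ are adjacent in one configuration but not the other; an asymmetric example is $e=1$, $e'=d-1$ when $d \ge 3$. This shows that swapping at one level forces swapping at adjacent levels. Finally, I would handle small $d$ (notably $d=2$) by direct inspection, and verify that the transposition is genuinely realized by the natural $\bbS_2$ action, which is faithful.
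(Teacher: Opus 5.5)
\textbf{There is a genuine gap in your coherence step.}

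Your first step is sound. It is in fact more careful than the paper's sketch, which just says Lemma~\ref{lem:counting} preserves $\bbS_2$-orbits. When $3e=d-1$, neither $u$ nor the weight pair separates $\B_{e,\{1,2\}}$ from $\B_{e,\{1\}},\B_{e,\{2\}}$, and your adjacency argument with the fixed vertex $\B_{0,\{1,2\}}$ correctly repairs this.

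The coherence step cannot work as proposed. Write $x_e=\B_{e,\{1\}}$ for $1\le e\le d-1$, so that $\B_{e,\{2\}}=x_{d-e}$.
\begin{itemize}
\item For $e\neq e'$ there is exactly one $1$-cell joining $x_e$ and $x_{e'}$ (the path you describe). So the $x_e$ span a complete graph.
\item ``$\B_{e,\{1\}}$ adjacent to $\B_{e',\{2\}}$'' just means $e+e'\neq d$, i.e.\ that the two vertices are distinct. Your condition $e+(d-e')\le d$ is not the right one.
\item Your example $e=1$, $e'=d-1$ is degenerate, because $\B_{d-1,\{2\}}=\B_{1,\{1\}}$ is the same vertex.
\item The number of $1$-cells joining $x_e$ and $\B_{f,\{1,2\}}$ is $[f\ge e]+[f\ge d-e]$ (each bracket is $1$ if the inequality holds, $0$ otherwise). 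This is symmetric under $e\leftrightarrow d-e$.
\end{itemize}
Consequently the $1$-skeleton of $\T_{d,2}$, even counted with multiplicities, is preserved by the permutation that swaps a single pair $x_e\leftrightarrow x_{d-e}$ and fixes every other vertex. No argument based only on which vertices are joined by $1$-cells can force the swaps to be coherent. So ``swapping at one level forces swapping at adjacent levels'' does not follow.

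The missing ingredient is information carried by the $1$-cell itself, namely its multiset of vertex weights. This is preserved by the Corollary to Proposition~\ref{prop:1endclups}, and it is what the paper's ``possible images of this path'' argument uses.

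Suppose $\varphi$ fixes one pair $\{x_{e'},x_{d-e'}\}$ pointwise and swaps a different pair $\{x_e,x_{d-e}\}$, neither being $\{x_{d/2}\}$. After replacing $e$ or $e'$ by its complement, take $e'<d/2<e$; note $e'\neq d-e$.
\begin{itemize}
\item The path joining $x_{e'}$ and $x_e$ has weights $\{e',e-e',d-e\}$.
\item Its image must be the unique path joining $x_{e'}$ and $x_{d-e}$. That path has weights $\{e',e,d-e-e'\}$ if $e+e'<d$, and $\{d-e,d-e',e+e'-d\}$ if $e+e'>d$.
\item Neither multiset equals $\{e',e-e',d-e\}$ unless $e=d/2$ or $e'=d/2$, which gives the contradiction.
\end{itemize}

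One further caution concerns $d=2$. There you cannot reduce to the $0$-skeleton via Theorem~\ref{thm:restriction_inj} at all. The transposition fixes all three vertices $\B_{0,\{1,2\}},\B_{1,\{1,2\}},\B_{1,\{1\}}$ of $\T_{2,2}$. Yet it moves the $1$-cell given by the path with vertex weights $1,0,1$ carrying markings $1$ and $2$ on the first two vertices, sending it to a non-isomorphic path. So the ``direct inspection'' must genuinely use higher-dimensional cells, as the paper does.
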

\begin{proof}
When $d = 2$, one can prove this by direct analysis of the top-dimensional cells of the complex, so we suppose $d \geq 3$.

First use Lemma \ref{lem:counting} to prove that $\bbS_2$-orbits of vertices are preserved. We would like to show that if
\[ \varphi \B_{e, \{1\}} = \varphi \B_{e, \{2\}} \] for some $e \neq d/2$, then $\varphi$ acts as the transposition on all $\B_{e', \{1\}}$ for $e' \neq d/2$. To see this, one may suppose for sake of contradiction that $\varphi \B_{e', \{1\}} = \B_{e', \{1\}}$, where $e' \neq d/2$ and without loss of generality $e' < e$, and then consider the action of $\varphi$ on a path with $3$ vertices, such that the leftmost vertex has weight $e'$ and supports marking $1$, and the rightmost vertex has weight $d - e$ and supports marking $2$, so that the middle vertex supports no markings and has weight $e - e'$; one arrives at a contradiction by considering possible images of this path.
 \end{proof}

\section{The case $d = 1$ and the Fulton--MacPherson compactification}\label{sec:degree_one}
When $d = 1$ and $n \geq 2$, there is a canonical isomorphism of simple normal-crossings pairs
\[ \Mbar_{0, n}(\P^1, 1) \cong \P^1[n],\]
where $\P^1[n]$ is the Fulton--MacPherson compactification of the ordered configuration space $\Conf_n(\P^1)$, and the snc divisors are the boundary divisors in each case. Thus the dual complex of
\[ \Conf_n(\P^1) \subset \P^1[n] \]
is canonically identified with $\TT_{1, n}$ for $n \geq 2$. This extends to arbitrary smooth proper varieties $X$: the dual complex of
\[ \Conf_n(X) \subset X[n] \]
does not depend on $X$, as is clear from Fulton--MacPherson's original blow-up construction of $X[n]$. Therefore the dual boundary complex of $X[n]$ is identified with $\T_{1, n}$ for any smooth proper variety $X$ and any integer $n \geq 2$. In this section we will compare the dual complex $\T_{1, n}$ to the dual complex $\T_{0, n+1}$ of the boundary divisor of the moduli space $\Mbar_{0, n+1}$. 

\subsection{From abstract simplicial complexes to symmetric $\Delta$-complexes} We will use that the groupoids $\Tree_{1, n}$ and $\Tree_{0, n + 1}$ are automorphism-free in order to think of their dual complexes $\T_{1, n}$ and $\T_{0, n+1}$ as \textit{abstract simplicial complexes}. To be precise, an abstract simplicial complex is a pair where $V$ is a finite set of vertices (0-cells),  and $\calF \subseteq 2^V$ is a collection of subsets of $V$ which is closed under the operation of taking subsets. The set
\[K(p):= \{F \in \Delta \mid |F| = p+1 \} \subset \calF \]
is the set of $p$-simplices of $K$. There is a full and faithful functor from the category of abstract simplicial complexes to the category of symmetric $\Delta$-complexes, taking an abstract simplicial complex $K$ symmetric $\Delta$-complex $\Delta_K$ given by
\[ \Delta_K[p]= \{ (F, \omega) \mid F \in K(p),\, \omega : F \to [p] \text{ a bijection} \}, \]
with the natural face maps.

The dual complex $\T_{0, n + 1}$ can be described as an abstract simplicial complex: the vertices correspond to those graphs in $\Tree_{0, n + 1}$ with exactly one edge, and a collection of $p + 1$ vertices spans a $p$-simplex if and only if there exists a tree in $\T_{0, n + 1}$ which admits a contraction to each of the corresponding one-edge graphs. The complex $\T_{1, n}$ admits an analogous description. 

If $K = (V_K, \calF_K)$ is an abstract simplicial complex, we define a new abstract simplicial complex $\mathrm{Cone}(K)$, called the \textit{cone over $K$}, with vertex set $V_K \cup \{\star\}$ and face set
\[ \calF_K \cup\{ F \cup \{\star\} \mid F \in \calF_K\}. \]
In other words, we add a single new vertex $\star$ to $K$, and then for each $p$-dimensional face $F$ of $K$ we add a new $p+1$-dimensional face which contains $F$ as a facet and $\star$ as a vertex. Passing to the geometric realization of $K$, this operation corresponds to taking the topological cone over $K$.
\begin{thm}\label{thm:cone_iso}
For $n \geq 2$, there is a canonical isomorphism of abstract simplicial complexes
\[ \T_{1, n} \cong \mathrm{Cone}(\T_{0, n+1}). \]
\end{thm}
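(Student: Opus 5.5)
The plan is to construct an explicit bijection at the level of trees and check that it respects contractions. A stable $(1,n)$-tree has exactly one vertex of weight $1$, which we call the \emph{heavy} vertex $h$. Every weight-zero vertex satisfies $\val(v)+|m^{-1}(v)|\geq 3$, while $h$ carries no stability condition. Given such a tree $\TT$, attach a new leg labelled $n+1$ at $h$ and forget the weight. If $h$ now satisfies $\val(h)+|m^{-1}(h)|+1\geq 3$, we obtain a stable $(0,n+1)$-tree. If instead $h$ is a leaf with no markings, its edge is a $1$-end, and then $h$ has valence $1$ and carries only the leg $n+1$, so it is not stable; in that case contract the $1$-end, which moves leg $n+1$ onto the neighbour of $h$. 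For $n\geq 2$, write $\Phi(\TT)\in\Tree_{0,n+1}$ for the result in either case.

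Next I will show that $\Phi$ induces a bijection between stable $(1,n)$-trees \emph{without} a $1$-end and $\Tree_{0,n+1}$. The inverse puts weight $1$ on the vertex carrying leg $n+1$ and deletes that leg. Stability of the heavy vertex is automatic, and the only issue is that the result has no $1$-end, which holds because the vertex carrying leg $n+1$ has $\val+|m^{-1}|\geq 2$ after the leg is removed. The trees with a $1$-end are exactly the trees $\TT'$ with no $1$-end plus one extra edge: split the heavy vertex into a weight-zero vertex and an unmarked weight-$1$ leaf. The new weight-zero vertex is stable precisely because $\val+|m^{-1}|\geq 2$ held before the split. Each tree has at most one $1$-end, since there is only one weight-$1$ vertex. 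So cells of $\T_{1,n}$ are in bijection with pairs (cell $F$ of $\T_{0,n+1}$, with or without $\star$), where $\star=\B_{1,\varnothing}$ is the unique tree consisting of a $1$-end only. The edge set of $\TT$ is $E(\Phi(\TT))$, together with the $1$-end if there is one.

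Finally I will check compatibility with contractions, which gives compatibility with vertices and faces and hence an isomorphism of abstract simplicial complexes. Both groupoids $\Tree_{1,n}$ and $\Tree_{0,n+1}$ are automorphism-free, because all legs are labelled and there is a unique heavy vertex. Therefore it suffices to show two things. First, contracting a non-$1$-end edge $e$ of $\TT$ commutes with $\Phi$, so that $\Phi(\TT/e)=\Phi(\TT)/e$. Second, contracting the $1$-end gives $\Phi(\TT)$ back unchanged. The second point holds by construction. For the first, the markings, and the weight with its corresponding leg $n+1$, both travel along the contraction in the same way. One subtlety is that contracting an edge may destroy the $1$-end, but it can only do so by contracting the $1$-end itself, so this case does not arise. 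The vertices of $\T_{1,n}$ are then $\star$ together with the one-edge trees $\Phi^{-1}$ of vertices of $\T_{0,n+1}$. A set of vertices spans a simplex if and only if some tree contracts to each of them, and this matches the cone description.

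The main obstacle will be the bookkeeping around stability at the heavy vertex. I need to verify carefully that the two operations, splitting off a $1$-end and attaching or removing leg $n+1$, preserve stability in both directions, and that the hypothesis $n\geq 2$ excludes degenerate cases. The relevant degenerate case is the empty or one-vertex tree with $n<2$, where $\Tree_{0,n+1}$ would be empty. I will handle this first by a direct case check on $\val(h)+|m^{-1}(h)|\in\{0,1,\geq 2\}$.
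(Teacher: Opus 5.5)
Your proposal is correct and follows essentially the same route as the paper. Both arguments identify the trees without a $1$-end with $\Tree_{0,n+1}$ by trading the weight-one vertex for marking $n+1$, and obtain every remaining cell by sprouting the unique $1$-end, which adjoins the cone vertex $\B_{1,\varnothing}$. Your version is more careful about stability and compatibility with contractions; the one check to add is that contracting a non-$1$-end edge cannot create a $1$-end, which holds because the weight-one vertex of $\TT/e$ is either untouched or satisfies $\val+|m^{-1}|\geq 2$.
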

\begin{proof}
First observe that there is an embedding of abstract simplicial complexes
\[ \T_{0, n + 1} \hookrightarrow \T_{1, n} \]
determined by replacing the $(n + 1)$st marking on a tree $\TT \in \Tree_{0, n+1}$ with a vertex weight of weight $1$. In this way, we identify $\T_{0, n+1}$ with the subcomplex of $\T_{1, n}$ determined by those trees which do not contain any $1$-end.  Observe that every cell in $\T_{1, n} \smallsetminus \T_{0, n+1}$, other that $\B_{1, \varnothing}$, is obtained from a unique tree $\TT\in \Tree_{0, n+1}$ by sprouting the unique vertex of weight $1$ in order to create a $1$-end. The resulting cell always has $\B_{1, \varnothing}$ as a vertex. We have obtained $\T_{1, n}$ from $\T_{0, n+1}$ by first adding the vertex $\B_{1, \varnothing}$, and then adding exactly one new simplex for each $f$ in $\T_{0, n+1}$, which is spanned by $f$ and $\B_{1, \varnothing}$. This construction characterizes the cone over $\T_{0, n+1}$.
\end{proof}

\begin{rem}
    The cone $\mathrm{Cone}(K)$ over a simplicial complex $K$ is a special case of the \textit{join} $K *K'$ of two simplicial complexes $K$ and $K'$. This complex has $V_{K*K'} = V_K \cup V_{K'}$ and \[\calF_{K * K'}  =F \cup F' \mid F \in \calF_{K} \text{ and } F' \in \calF_{K'}\}. \] Then $\mathrm{Cone}(K)$ is the join of $K$ with a single vertex. Theorem \ref{thm:cone_iso} can be generalized to prove that
    \[ \T_{d, n} \cong (\T_{0, n +d} * \sigma_d)/\bbS_d, \]
    where $\sigma_d$ is the standard $d$-simplex, with vertex set $[d]$ and face set $2^{[d]}$. The quotient is taken in the category of symmetric $\Delta$-complexes.
 \end{rem}

\begin{cor}
For $n \geq 4$, we have $\Aut(\T_{1, n}) \cong \bbS_{n + 1}$. When $n = 3$, we have $\Aut(\T_{1, 3}) \cong \bbS_{3}$, while $\Aut(\T_{1, 2})$ is trivial. 
\end{cor}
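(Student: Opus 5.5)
Since $\Tree_{1,n}$ and $\Tree_{0,n+1}$ have no nontrivial automorphisms, the full and faithful functor from abstract simplicial complexes to symmetric $\Delta$-complexes lets us compute $\Aut(\T_{1,n})$ as the automorphism group of the abstract simplicial complex $\mathrm{Cone}(\T_{0,n+1})$ from Theorem \ref{thm:cone_iso}, i.e.\ as the group of permutations of its vertex set that preserve faces. The plan is to show that every such automorphism fixes the cone vertex $\star = \B_{1,\varnothing}$. Granting this, $\varphi$ preserves the link of $\star$, which is exactly $\T_{0,n+1}$. Conversely, every automorphism of $\T_{0,n+1}$ extends uniquely to the cone by fixing $\star$. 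This gives $\Aut(\T_{1,n}) \cong \Aut(\T_{0,n+1})$, and the result of Abreu--Pacini \cite{AbreuPacini} gives $\Aut(\T_{0,n+1}) \cong \bbS_{n+1}$ for $n+1 \geq 5$.

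The main step is showing that the cone vertex is fixed. Here $\star$ is adjacent to every other vertex, so it suffices to show that no vertex of $\T_{0,n+1}$ is adjacent to all other vertices of $\T_{0,n+1}$. Put differently, for every one-edge $(n+1)$-marked tree $\B$ we need another one-edge tree that is not compatible with it. A one-edge tree corresponds to a split $A \sqcup A^c$ of $\{1,\dots,n+1\}$ with $|A|,|A^c|\geq 2$, and two splits are compatible exactly when they are nested or disjoint in the usual sense. If $|A|\geq 2$ and $|A^c|\geq 2$ and $n+1\geq 5$, choose $a\in A$ and $b\in A^c$. Then choose a set $B$ containing $a$ and $b$, together with a third element, so that both $B$ and $B^c$ have size at least $2$. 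This $B$ crosses $A$: all four of $A\cap B$, $A\cap B^c$, $A^c\cap B$, $A^c \cap B^c$ should be nonempty.

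Checking that $B$ can always be chosen to cross $A$ is the only real obstacle, and it needs care when $|A|=2$. In that case take $B=\{a,b,c\}$ with $c\in A^c$, so that $B^c$ contains the other element of $A$ and at least one further element because $n+1\geq 5$. The case $|A|\geq 3$ with $|A^c| \geq 2$ is symmetric. An alternative that avoids this case analysis is a degree or face-count argument. The vertex $\star$ lies in a top-dimensional simplex together with every facet of $\T_{0,n+1}$, so counting the maximal faces containing each vertex distinguishes $\star$. I would use the crossing argument, since it is more elementary.

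For the small cases, when $n=3$ the complex $\T_{0,4}$ consists of three isolated vertices. So $\T_{1,3}$ is a cone over three points, that is, a tripod, and the central vertex is the unique vertex of valence $3$, which forces $\star$ to be fixed. The automorphisms then permute the three leaves freely, so the group is $\bbS_3$. When $n=2$, $\T_{0,3}$ is empty and $\T_{1,2}$ is a single point, so its automorphism group is trivial. This finishes the proof of the corollary.
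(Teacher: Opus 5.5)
Your proposal is correct and follows the paper's proof. Both arguments show that the cone vertex $\B_{1,\varnothing}$ is the unique vertex adjacent to every other vertex, so every automorphism fixes it, which reduces the problem to $\Aut(\T_{0,n+1})$ and the Abreu--Pacini result. Your crossing-split check supplies a detail the paper leaves implicit; in fact $B=\{a,b\}$ by itself already crosses $A$ for every $n \geq 3$, so you do not need the third element or the case split on $|A|$.
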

\begin{proof}
First, observe that the vertex $\B_{1, \varnothing}$ is preserved by every automorphism of $\T_{1, n}$: when $n \geq 3$, it is seen to be the unique vertex of $\T_{1, n}$ which is connected by a $1$-cell to every other vertex of $\T_{1, n}$. When $n = 2$, we have that $\T_{1, 2}$ is the single vertex $\B_{1, \varnothing}$. Therefore, we have
\[ \Aut(\T_{1, n}) \cong \Aut(\T_{0, n + 1}) \]
for all $n \geq 3$. The theorem now follows from Abreu--Pacini's result that $\Aut(\T_{0, n+1}) \cong \bbS_{n + 1}$ for $n \geq 4$, and $\Aut(\T_{0, 4}) \cong \bbS_3$ \cite{AbreuPacini}. 
\end{proof}

Our calculation of $\Aut(\T_{1, n})$ suggests that usually, the map of pairs
\[ \Aut(X[n], \Conf_n(X))  \to \Aut(\T_{1, n})  \]
is not surjective (here we have set $\Aut(Y, Z)$ to be the group of automorphisms of $Y$ which preserve the subspace $Z$). For example, if $C$ is a curve of genus $\neq 1$, Masseranti has shown that $\Aut(C[n]) \cong \Aut(C) \times \bbS_n$ \cite{MassarentiFM}, and conjectures that $\Aut(X[n]) \cong \Aut(X) \times \bbS_n$ when $X$ is of general type. In particular, his result also implies that $\Aut(\Mbar_{0, n}(\P^1, 1)) \cong \Aut(\P^1) \times \bbS_n$, so $\T_{1, n}$ has automorphisms which do not extend to any automorphism of $\Mbar_{0, n}(\P^1, 1)$.

Since the cone over any simplical complex is contractible, we immediately have the following corollary, which is likely well-known to experts, and is already implicit in \cite{vzdualcomplex}, where $\T_{d, n}$ was shown to be contractible for all $d \geq 1$. 
\begin{cor}\label{cor:wt0vanishing}
Suppose $X$ is a smooth, proper, and connected variety over $\C$. Then we have the vanishing result
\[W_0H^\star_c(\Conf_n(X);\QQ) = 0 \]
for the weight-zero compactly-supported rational cohomology of $\Conf_n(X)$ when $n \geq 2$.
\end{cor}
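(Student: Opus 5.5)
We will derive the corollary from the standard relation between weight-zero compactly supported cohomology and the dual complex of a boundary divisor, together with Theorem \ref{thm:cone_iso}. Let $D = X[n]\smallsetminus \Conf_n(X)$. By Fulton--MacPherson, $X[n]$ is smooth and proper over $\C$, and $D$ is a simple normal-crossings divisor when $n \geq 2$. Deligne's theory of mixed Hodge structures then gives a canonical isomorphism, for a smooth variety $U$ with snc compactification $\overline{U}$ and boundary $D$,
\[ W_0 H^k_c(U;\QQ) \cong \widetilde{H}^{k-1}(\Delta(D);\QQ), \]
where $\Delta(D)$ is the dual complex of $D$. This is the form used in \cite{cgp}, with the convention that the reduced cohomology of the empty complex is $\QQ$ in degree $-1$.

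The argument comes from the weight spectral sequence of the pair $(\overline{U}, D)$. Its $W_0$ row is the complex
\[ H^0(\overline{U}) \to \bigoplus_i H^0(D_i) \to \bigoplus_{i<j} H^0(D_i\cap D_j) \to \cdots. \]
This is exactly the augmented cellular cochain complex of $\Delta(D)$. The augmentation term $H^0(\overline{U}) = \QQ$ is justified because $X$ is connected, so $X[n]$ is connected.

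We will apply this with $U = \Conf_n(X)$. The discussion at the start of \S\ref{sec:degree_one} identifies $\Delta(D)$ with $\T_{1,n}$ for every smooth proper $X$. By Theorem \ref{thm:cone_iso}, $\T_{1,n} \cong \mathrm{Cone}(\T_{0,n+1})$, and the geometric realization of a cone is contractible. When $n = 2$, the complex $\T_{0,3}$ is empty, but its cone is the single vertex $\B_{1,\varnothing}$, which is still contractible. So all reduced cohomology of $\T_{1,n}$ vanishes, and hence $W_0 H^\star_c(\Conf_n(X);\QQ) = 0$.

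The main obstacle is the first identification, namely that the dual complex of the Fulton--MacPherson boundary is $\T_{1,n}$ independently of $X$. Here we rely on the paper's earlier assertion, which rests on the blow-up construction. The strata of $X[n]$ are indexed by nested collections of subsets of $\{1,\ldots,n\}$ of size at least two, equivalently by stable $(1,n)$-trees, and intersections of the boundary components are nonempty exactly when the collection is nested. We also need to check that the irreducible components of $D$, and of their intersections, are irreducible, so that the cells of the dual complex match the simplices of $\T_{1,n}$ one for one. This holds because each stratum is a bundle built from configuration-type spaces over products of diagonals in $X$, and connectedness of $X$ makes these irreducible.
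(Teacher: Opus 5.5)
Your proposal is correct and follows essentially the same route as the paper: Deligne's weight spectral sequence (as in \cite{cgp}) identifies $W_0H^\star_c(\Conf_n(X);\QQ)$ with the shifted reduced rational cohomology of the dual complex of the Fulton--MacPherson boundary. That dual complex is $\T_{1,n}\cong\mathrm{Cone}(\T_{0,n+1})$, which is contractible. Your extra remarks spell out details the paper leaves implicit: the computation of the weight-zero row, the $n=2$ case, and the nested-set description of the strata.
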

\begin{proof}
    For any smooth variety $Y$, Deligne's weight spectral sequence identifies the weight-zero compactly-supported rational cohomology of $Y$ with the reduced rational cohomology of the dual complex of the boundary $\Delta(\overline{Y} \smallsetminus Y)$ in any normal-crossings compactification $Y \hookrightarrow \overline{Y}$; see \cite{cgp}. Now we apply this result when $Y = \Conf_n(X)$ and $\overline{Y} = X[n]$ is the Fulton--Macpherson compactification. The dual boundary complex is $\T_{1, n}$, which is a cone and therefore contractible.
\end{proof}
Finally, the identification of Theorem \ref{thm:cone_iso} implies that the dual complex $\T_{1, n}$ of $\Conf_n(X) \subset X[n]$ is a \textit{flag complex}: it is the maximal simplicial complex on its $1$-skeleton.
\begin{cor}
The simplicial complex $\T_{1, n}$ is a flag complex for all $n \geq 2$. 
\end{cor}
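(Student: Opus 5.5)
By Theorem \ref{thm:cone_iso} it suffices to show that $\mathrm{Cone}(\T_{0, n+1})$ is a flag complex for $n \geq 2$. That is, whenever a finite set $F$ of vertices is pairwise joined by $1$-cells, $F$ spans a simplex. The proof splits into two steps: a general fact about cones over flag complexes, and the flag property of $\T_{0, n+1}$ itself.

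For the first step, note that the cone over a flag complex is again flag. Let $F$ be a set of vertices of $\mathrm{Cone}(K)$ that are pairwise adjacent. If $\star \notin F$, then $F$ is a pairwise adjacent set of vertices in $K$. The edges of $\mathrm{Cone}(K)$ between non-cone vertices are exactly the edges of $K$, so $F \in \calF_K$ by flagness of $K$. If $\star \in F$, then $F \smallsetminus \{\star\}$ is pairwise adjacent in $K$, hence a face of $K$, and so $F = (F\smallsetminus\{\star\}) \cup \{\star\}$ is a face of the cone. This step is purely formal.

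The second step is the flag property of $\T_{0, n+1}$. This is the result of \cite{GiansiracusaFlag} cited in the introduction, and I would simply invoke it. For a self-contained argument, use the standard description. The vertices of $\T_{0, n+1}$ are splits $\{A, A^c\}$ of $\{1, \ldots, n+1\}$ with $|A|, |A^c| \geq 2$. Two vertices are adjacent exactly when the splits are compatible, meaning one of $A \cap B$, $A \cap B^c$, $A^c \cap B$, $A^c\cap B^c$ is empty. A set of pairwise compatible splits is realized by a unique stable tree whose one-edge contractions are exactly those splits; this is the splits-equivalence theorem for trees. That tree is then a simplex containing all the given vertices. Establishing this realization is the only real content of the argument.

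The main thing to check is the identification of $1$-cells with compatibility. I need that two splits are adjacent in $\T_{0, n+1}$ if and only if they are compatible, and that the abstract simplicial complex description of $\T_{0, n+1}$ given before Theorem \ref{thm:cone_iso} matches the tree realization. I expect both to be routine, since $\Tree_{0, n+1}$ is automorphism-free and a tree is determined by its set of edge-splits. Given this, the cone isomorphism transports the flag property from $\mathrm{Cone}(\T_{0, n+1})$ to $\T_{1, n}$. The degenerate case $n = 2$, where $\T_{1, 2}$ is a single vertex, is trivially flag.
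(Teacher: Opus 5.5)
Your proposal is correct and follows the same route as the paper: cite Giansiracusa for the flag property of $\T_{0,n+1}$, observe that a cone over a flag complex is flag, and transport the property along Theorem~\ref{thm:cone_iso}. Your explicit case check for the cone step and your optional sketch via split compatibility only spell out what the paper leaves as straightforward or cites.
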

\begin{proof}
    Giansiracusa \cite{GiansiracusaFlag} proved that $\T_{0, n}$ is a flag complex. It is straightforward to see that the cone over a flag complex must also be a flag complex, so the result follows from Theorem \ref{thm:cone_iso}.
\end{proof}

\bibliographystyle{amsalpha}
\bibliography{reference}
% \begin{lem}
% For each $\B_{e, S} \in \T_{d,n}$ as in Definition \ref{defn:two_vertex_tree}, there exists some $\sigma \in S_n$ such that we have
% \[ \varphi \B_{e, S} = \sigma\B_{e,S}. \]
% \end{lem}
\end{document}